\setlist[enumerate]{leftmargin=*,align=left,labelindent=\parindent}
\tikzset{close/.style={near start,outer sep=-10pt}}
\newcommand{\defeql}{\overset{\mathrm{def}}{\,=\,}}
\newcommand{\defequiv}{\overset{\mathrm{def}}{\iff}}
\newcommand{\FPA}{\mathrm{FPA}}
\newcommand{\SGA}{\mathrm{SGA}}
\newcommand{\NID}{\mathrm{NID}}
\newcommand{\NIDbi}{\mathrm{NID}_\mathrm{bi}}
\newcommand{\NIDn}[1]{\mathrm{\NID}_{#1}}
\newcommand{\NIDf}{\mathrm{\NID}_{< \omega}}
\newcommand{\Nat}{\omega}
\newcommand{\mv}{\mathrm{mv}}
\newcommand{\pow}{\mathrm{Pow}}
\newcommand{\fin}{\mathrm{Fin}}
\newcommand{\ext}{\mathop{\mathrm{ext}}\nolimits}
\newcommand{\id}{\Delta}
\newcommand{\meets}{\between}
\newcommand{\op}{\mathrm{op}}
\newcommand{\cov}{\vartriangleleft}
\newcommand{\amp}{\mathrel{\&}}
\newcommand{\Pt}{\mathcal{P}\mathit{t}}
\newcommand{\One}{\mathbf{1}}
\newcommand{\BP}{\mathbf{BP}}
\newcommand{\Rel}{\mathbf{Rel}}
\newcommand{\CSpa}{\mathbf{CSpa}}
\newcommand{\CZF}{\mathbf{CZF}}
\newcommand{\ECST}{\mathbf{ECST}}
\newcommand{\Model}[1]{\mathcal{M}(#1)}
\newcommand{\Conv}{\mathrm{Conv}}
\DeclareMathOperator*{\medwedge}{{\textstyle{\bigwedge}}}
\newtheorem{theorem}{Theorem}
\newtheorem{proposition}[theorem]{Proposition}
\newtheorem{lemma}[theorem]{Lemma}
\theoremstyle{definition}
\newtheorem{definition}[theorem]{Definition}
\theoremstyle{remark}
\newtheorem{remark}[theorem]{Remark}
\numberwithin{equation}{section}
\title{Equivalents of the finitary non-deterministic inductive definitions}
\author{Ayana Hirata\hspace{.8em}
  Hajime Ishihara\hspace{.8em}
  Tatsuji Kawai\hspace{.8em}
  Takako Nemoto \\
\normalsize
School of Information Science\authorcr
Japan Advanced Institute of Science and Technology \authorcr
1-1 Asahidai, Nomi, Ishikawa 923-1292, Japan
}
\date{}
\begin{document}
\maketitle

\begin{abstract}
We present statements equivalent to some fragments of the
principle of
non-deterministic inductive definitions ($\NID$) by van den
Berg~(2013), working in a weak subsystem of constructive set theory
$\CZF$.
We show that several statements in constructive topology which were
initially proved using $\NID$  are equivalent to 
the elementary and finitary $\NID$s.
We also show that the finitary $\NID$ is equivalent to its binary
fragment and that the elementary $\NID$ is equivalent to a
variant of $\NID$ based on the notion of biclosed subset.
Our result suggests that proving these statements in
constructive topology requires genuine extensions of $\CZF$ with
the elementary or finitary~$\NID$.

\medskip
\noindent\textsl{Keywords:} Constructive set theory; 
Non-deterministic inductive definition; Set-generated class;
Basic pair; Formal topology\\[.3em]
\noindent\textsl{MSC2010:} 03E70; 03F50; 54A05; 06D22
%
%
\end{abstract}

\section{Introduction}
Many objects in mathematics are defined as subsets of some given
set, e.g., an open set of a topological space; a prime ideal
of a commutative ring; a sub-algebra of a certain algebraic structure.  The
totality of these objects, however, does not necessarily form a set in
predicative constructive foundations such as Martin-L\"of's type theory
\cite{MartinLofMLTT} or  Aczel's constructive set theory $\CZF$
\cite{Aczel-Rathjen-Note}. In particular, the lack of power-sets in
these foundations makes some of the standard constructions in general topology substantially
difficult to carry out, which requires a certain amount of ingenuity
\cite{AspectofTopinCZF,IshiharaPalmgrenQuotient,PalmgrenCoequalisers,PalmgrenMaxPartialPt}.

The crucial element of these predicative results consists in
constructing a subset of the totality of a certain type of objects,
called a \emph{generating subset}, in such a way that every object of that type can be
expressed as the union of the elements of the generating subset.
The problems of constructing such generating subsets
in constructive topology motivated 
van den Berg \cite{vandenBergNID} and
Aczel, Ishihara, Nemoto, and
Sangu~\cite{SGA} to
independently introduce principles of $\CZF$ which allow us to show
that a wide range of collections of mathematical objects are
set-generated.

The focus of this paper is on the principle introduced by van den Berg
\cite{vandenBergNID}, called \emph{non-deterministic inductive
definitions} ($\NID$), or more specifically, its elementary and
finitary fragments. The $\NID$ principle asserts that the class of
models of an infinitary propositional theory consisting of
formulas (or \emph{rules}) of the form $\bigwedge U \rightarrow \bigvee V$
has a generating subset, where $U$ and $V$ are subsets of the set of
propositional variables. The elementary and finitary $\NID$ principles are
obtained by restricting the propositional theory to those rules
whose premise is singleton and
finitely enumerable, respectively. In fact, van den Berg
\cite{vandenBergNID} showed that the finitary $\NID$ is equivalent to
the principle introduced by Aczel et al.\ \cite{SGA}, called the
\emph{set generation axiom} ($\SGA$).

The purpose of this paper is to extend the scope of reverse
mathematics, classical \cite{SimpsonSubsystem2ndOrderLogic} or constructive
\cite{ConstRevMatheCompactness,VeldmanFANKleene},
in a set-theoretic foundation. Here, we develop the
reverse mathematics of the $\NID$ principle initiated in
\cite{IshiharaNemotoNIDFullness} much further by showing that several statements in
constructive topology which were initially proved using $\NID$ or
$\SGA$ are in fact equivalent to the elementary $\NID$ or the finitary
$\NID$.
Specifically, we show that the elementary $\NID$ is equivalent to 
(1)~completeness and cocompleteness of the category of basic pairs by
Sambin~\cite{Sambin_BP_book} and (2) the existence of weak equalisers
in the category of sets and relations. Moreover,
the finitary $\NID$ is equivalent to
(1)~completeness and cocompleteness of the category of concrete
spaces and (2)~set-generation of the class of formal points of an
inductively generated formal topology.
We also show that 
the finitary $\NID$ is equivalent to its binary fragment and that the
elementary $\NID$ is equivalent to a symmetric variant of $\NID$,
called $\NIDbi$, formulated with respect to the notion of biclosed subset.

Our result suggests that proving these 
statements in constructive topology requires genuine extensions of $\CZF$ with  the elementary
$\NID$ or the finitary $\NID$, which are thought to be independent of $\CZF$
(cf.\ van den Berg~\cite[Section~8]{vandenBergNID}).
It remains to settle the exact relation between $\CZF$, and its
extensions with the elementary
$\NID$ or the finitary $\NID$, which would also settle the relative
strength of the equivalents of these principles.

\paragraph{Organisation} Section~\ref{sec:ECST} introduces the
base set theory for our work; Section~\ref{sec:NID} recalls the
$\NID$ principle and its relation to $\SGA$; Section~\ref{sec:NID1}
gives equivalents of the elementary $\NID$; and Section~\ref{sec:NIDf}
gives equivalents of the finitary $\NID$.

\section{Elementary constructive set theory} \label{sec:ECST}
We work in a weak subsystem of $\CZF$, called the \emph{elementary
constructive set theory} $\ECST$  \cite{CTSbook}, where none of the
known fragments of the $\NID$ principle seems to be derivable.

The language of $\ECST$ contains variables for sets and binary
predicates $=$ and $\in$. The axioms and rules of $\ECST$ are the
axioms and rules of intuitionistic predicate logic with equality, and
the following set-theoretic axioms:
\begin{description}
    \item[Extensionality:]
    $
     \forall a \forall b \left( \forall x \left( x \in a \leftrightarrow
    x \in b\right) \rightarrow a = b \right).
    $
  \item[Pairing:]
    $
    \forall a \forall b\exists y \forall u \left( 
    u \in y \leftrightarrow u = a \vee u = b \right).
    $
  \item[Union:]
    $
    \forall a \exists y \forall x  \left( x \in y \leftrightarrow
    \exists u \in a \left( x \in u \right) \right).
    $
  \item[Restricted Separation:]
     $
     \forall a \exists b   
    \forall x \left( x \in b \leftrightarrow x \in
    a \wedge
    \varphi(x)\right)
    $ \\[.5em]
    where $\varphi(x)$ is restricted.
    Here, a formula is said to be \emph{restricted} if all quantifiers
    in the formula occur in the forms $\forall x \in a$
    or $\exists x \in a$.

  \item[Replacement:]
      $
      \forall a \left( \forall x \in a \exists ! y\, \varphi(x,y)
      \rightarrow \exists b \forall y \left( y \in b \leftrightarrow
      \exists x \in a \, \varphi(x,y) \right) \right)
      $\\[.5em]
    where $\varphi(x,y)$ is any formula.
  \item[Strong Infinity:]
    \raisebox{-0.6\baselineskip}{$
    \begin{multlined}
      \exists a \left[ 
        0 \in a \wedge \forall x \left( x \in a \rightarrow x + 1 \in
        a\right) \right.\\
        \qquad\qquad \left.
        \wedge \forall y \left( 0 \in y \wedge \forall x \left( x \in
        y \rightarrow x + 1 \in y
        \right) \rightarrow a \subseteq y \right)
      \right]
    \end{multlined}
  $}
    \\[.5em]
      where $x + 1$ denotes $x \cup \left\{ x \right\}$ and $0$ is the
      empty set $\emptyset$. The set $a$ asserted to exist
      will be denoted by $\Nat$.
\end{description}
This completes the description of $\ECST$.

The constructive Zermelo--Fraenkel set theory $\CZF$
\cite{Aczel-Rathjen-Note} is obtained from $\ECST$ by substituting
Strong Collection for Replacement and adding Subset Collection and
$\in$-Induction. For the details of these axioms, the reader is
referred to Aczel and Rathjen \cite{Aczel-Rathjen-Note,CTSbook}.

In $\ECST$, Subset Collection implies
\begin{description}
\item[Fullness:]
  $
   \forall a \forall b \exists c 
   \left[ c \subseteq \mv(a,b)  \wedge \forall s \in
  \mv(a,b)  \exists r \in c \left( r \subseteq s\right)\right],
  $
\end{description}
where $\mv(a,b)$ is the class of total relations from $a$ to $b$.
In practice, Fullness is very important as it implies
Exponentiation, which asserts that the class $B^{A}$ of functions between
sets $A$ and $B$ is a set.
In particular, Fullness implies the following weak form of
Exponentiation:
\begin{description}
    \item[Finite Powers Axiom ($\FPA$):] For any set $S$, the class $S^{n}$ of functions from 
  $\left\{ 0,\dots,n-1 \right\}$ to $S$ is a set for all $n \in \omega$.
\end{description}
The extension of $\ECST$ with $\FPA$ is denoted by $\ECST + \FPA$.

A notable consequence of $\FPA$ is that the class $\fin(S)$ of finitely enumerable subsets
of a set $S$ is a set. Here,
a set $A$ is \emph{finitely enumerable} if there is a 
surjection $f \colon \left\{ 0,\dots,n-1 \right\} \to A$ for some
$n \in \Nat$. Note that we can decide whether a finitary enumerable set
is empty or inhabited by inspecting the domain of $f$.

\section{$\NID$ principles} \label{sec:NID}
The subjects of our investigation are classes closed under some sets of
rules on a set.
$\NID$ principles say that such a class has a generating subset
$G$ so that every member of the class arises as the union of elements of
$G$.
\begin{definition}
A \emph{rule} on a set $S$ is a pair $(a,b)$ of subsets of $S$.
A rule $(a,b)$ is said to be
  \emph{nullary} if $a$ is empty,
  \emph{elementary} if $a$ is  singleton, and
  \emph{finitary} if $a$ is  finitely enumerable.
A subset $\alpha \subseteq S$ is said to be \emph{closed under} a rule $(a,b)$ if
\begin{equation*}
  a \subseteq \alpha \implies b \meets \alpha,
\end{equation*}
where $b \meets \alpha$ means that $b \cap \alpha$ is inhabited.
If $R$ is a set of rules on $S$, then a subset $\alpha \subseteq S$ is said to
be \emph{$R$-closed} 
if it is closed 
under every rule in $R$.
\end{definition}

\begin{definition}
  Let $S$ be a set and $\pow(S)$ be the class of subsets of
  $S$. A subclass $\mathcal{C}$ of $\pow(S)$ is said to be
\emph{set-generated} if there exists a subset $G \subseteq \mathcal{C}$, called a
\emph{generating subset}, such that
\[
  \forall \alpha \in \mathcal{C} \forall x \in \alpha \exists \beta \in G 
  \left( x
\in \beta \subseteq \alpha \right).
\]
The principle $\NID$ 
reads:
\begin{description}
  \item[$\NID$:]
    For each set $S$ and a set $R$ of rules on $S$, the class of
    $R$-closed 
    subsets of $S$ is set-generated.
\end{description}
The \emph{nullary}, \emph{elementary}, and \emph{finitary} $\NID$
are the principles obtained from $\NID$ by restricting $R$ to nullary,
elementary, and finitary rules, respectively. These principles are denoted by
$\NIDn{0}$, $\NIDn{1}$, and $\NIDf$.
\end{definition}

\begin{remark}
  \label{rem:NID0Fullness}
$\NIDf$ clearly implies $\NIDn{1}$. Moreover,
Ishihara and Nemoto~\cite{IshiharaNemotoNIDFullness} showed that $\NIDn{1}$ implies
$\NIDn{0}$ and that $\NIDn{0}$
is equivalent to Fullness over $\ECST$. In particular, $\NIDn{0}$ implies $\FPA$.
\end{remark}

We recall the connection between $\NIDf$
and the \emph{set generation axiom} ($\SGA$) introduced by 
Aczel et al.\ \cite{SGA}.
\begin{definition}
  For any set $S$, a subclass $\mathcal{C}$ of $\pow(S)$ is said to be
  \emph{strongly set-generated} if there exists a subset $G \subseteq \mathcal{C}$ such that
  \[
    \forall \alpha \in \mathcal{C} \forall \sigma \in
  \fin\left(\alpha \right) \exists \beta \in G
  \left( \sigma \subseteq \beta \subseteq \alpha  \right).
  \]
  The principle $\SGA$ reads:
  \begin{description}
    \item[$\SGA$:]
      For each set $S$ and each subset $Z \subseteq \fin(S) \times
      \pow(\pow(S))$, the class
      \[
      \mathcal{M}(Z) = \left\{ \alpha \in \pow(S) \mid \forall (\sigma,
      \Gamma) \in Z \left( \sigma \subseteq \alpha \rightarrow
      \exists U \in \Gamma \left( U \subseteq \alpha \right) \right) \right\}
      \]
      of models of $Z$ is strongly set-generated.
  \end{description}
A subset $Z \subseteq \fin(S) \times
      \pow(\pow(S))$ is called a \emph{generalised geometric theory} 
over $S$ (of rank $1$), and its element is written $\medwedge \sigma \vdash
\bigvee_{U \in \Gamma}\bigwedge U$ instead of $(\sigma,\Gamma)$.
\end{definition}
The principle $\SGA$ looks stronger than $\NIDf$, but they are
equivalent.
\begin{proposition}[{van den Berg \cite[Theorem 7.3]{vandenBergNID}}]
  $\SGA$ and $\NIDf$ are equivalent over $\ECST$.
  \qed
\end{proposition}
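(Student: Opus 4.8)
The plan is to prove the two implications separately over $\ECST$, with the forward direction $\SGA \metaimplies \NIDf$ being routine and the converse carrying all the difficulty. For $\SGA \metaimplies \NIDf$, I would observe that a finitary rule $(a,b)$ on $S$ is just the generalised geometric rule $\medwedge a \vdash \bigvee_{x \in b}\bigwedge\{x\}$: sending $(a,b)$ to $(a,\{\{x\}\mid x\in b\})$ turns a set $R$ of finitary rules into a theory $Z \subseteq \fin(S)\times\pow(\pow(S))$ by Replacement, and unfolding the definitions shows that $\Model{Z}$ is exactly the class of $R$-closed subsets, since $\exists U \in \{\{x\}\mid x\in b\}\,(U\subseteq\alpha)$ is precisely $b \meets \alpha$. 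Strong set-generation of $\Model{Z}$ then yields ordinary set-generation of the $R$-closed subsets by specialising the finite set $\sigma$ to a singleton $\{x\}$, which is $\NIDf$.

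The substance is in $\NIDf \metaimplies \SGA$, where there are two mismatches to overcome: the conclusions $\bigvee_{U\in\Gamma}\bigwedge U$ of a geometric theory are disjunctions of \emph{(possibly infinite)} conjunctions rather than disjunctions of atoms as in a $\NIDf$-rule, and $\SGA$ demands \emph{strong} set-generation rather than the plain set-generation delivered by $\NIDf$. I would handle both by enlarging the carrier with auxiliary points. Put $\mathcal U = \bigcup\{\Gamma \mid (\sigma,\Gamma)\in Z\}\subseteq\pow(S)$, a set by Replacement and Union, and --- using that $\NIDf$ implies $\FPA$ (Remark~\ref{rem:NID0Fullness}), so that $\fin(S)$ is a set --- form the disjoint union $S^{+} = S \sqcup \mathcal U \sqcup \fin(S)$, writing $c_U$ for the copy of $U\in\mathcal U$ and $e_\sigma$ for the copy of $\sigma\in\fin(S)$, so that $c_U$ names the conjunction $\bigwedge U$ and $e_\sigma$ names $\bigwedge\sigma$.

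On $S^{+}$ I would impose the finitary rules $(\{c_U\},\{x\})$ for all $U\in\mathcal U$ and $x\in U$; $(\sigma,\{c_U\mid U\in\Gamma\})$ for all $(\sigma,\Gamma)\in Z$; $(\{e_\sigma\},\{x\})$ for all $\sigma\in\fin(S)$ and $x\in\sigma$; and $(\sigma,\{e_\sigma\})$ for all $\sigma\in\fin(S)$. Every premise is either a singleton or a finite $\sigma$, so all rules are finitary, and the totality $R^{+}$ of them is a set by Replacement. The design choice that sidesteps the infinitude of the $U$ is that the naming points only ever propagate \emph{outward} (the closure rules give $c_U\in\alpha \metaimplies U\subseteq\alpha$), so no rule with an infinite premise is ever needed; the reverse inclusion is instead supplied by hand when building specific models, and this is precisely the point where possibly-infinite conjunctions are tamed.

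Finally I would establish the transfer and thereby extract strong set-generation. Closure under the first two rule families shows that $\alpha^{+}\cap S\in\Model{Z}$ for every $R^{+}$-closed $\alpha^{+}$, while for $\alpha\in\Model{Z}$ the set $\hat\alpha = \alpha\cup\{c_U\mid U\subseteq\alpha\}\cup\{e_\sigma\mid\sigma\subseteq\alpha\}$ is $R^{+}$-closed with $\hat\alpha\cap S = \alpha$. Applying $\NIDf$ to $(S^{+},R^{+})$ gives a generating subset $G^{+}$ of the $R^{+}$-closed subsets, and I would set $G = \{g^{+}\cap S \mid g^{+}\in G^{+}\}\subseteq\Model{Z}$ by Replacement. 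To verify strong set-generation, take $\alpha\in\Model{Z}$ and $\rho\in\fin(\alpha)$: since $e_\rho\in\hat\alpha$, plain set-generation yields $g^{+}\in G^{+}$ with $e_\rho\in g^{+}\subseteq\hat\alpha$, and closure under the rules $(\{e_\rho\},\{x\})$ forces $\rho\subseteq g^{+}\cap S\subseteq\alpha$, so $\beta = g^{+}\cap S\in G$ satisfies $\rho\subseteq\beta\subseteq\alpha$. The main obstacle throughout is exactly this passage from plain to strong set-generation; encoding each finite $\sigma$ as a single point $e_\sigma$ --- legitimate because $\fin(S)$ is a set under $\FPA$ --- is what allows one generator containing $e_\rho$ to capture all of $\rho$ simultaneously.
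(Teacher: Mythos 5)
Your proof is correct. The paper gives no argument of its own here --- it cites van den Berg's Theorem~7.3 with a \qed{} --- and your construction (auxiliary atoms $c_U$ naming the conjunctions $\medwedge U$, together with atoms $e_\sigma$ for finitely enumerable subsets so that one generator containing $e_\rho$ absorbs all of $\rho$ and upgrades plain to strong set-generation) is essentially van den Berg's original argument, with all set-existence steps legitimate in $\ECST$ via Replacement and the fact that $\NIDf$ implies $\FPA$.
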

\begin{remark}\label{rem:Rankn}
In $\SGA$, we can take $Z$ to be a set of
elements of the form
\[
  \medwedge \sigma \vdash
  \bigvee_{U_{n-1} \in \Gamma_{n-1}} \bigwedge_{\Gamma_{n-2} \in
  U_{n-1}}
  \cdots \bigvee_{U_{0} \in \Gamma_{0}}\bigwedge U_{0}
\]
where $\sigma \in \fin(S), U_{0} \in \pow(S), U_{1} \in
\pow(\pow(\pow(S))),\dots$, and the right-hand-side
is a finite nesting of $\bigvee \bigwedge$ pairs.
The resulting principle is still equivalent to 
$\NIDf$ over $\ECST$.
See van den Berg \cite[Theorem 4.2]{vandenBergNID}.
\end{remark}

\section{Elementary $\NID$}\label{sec:NID1}
In this section, we give some statements equivalent to 
$\NIDn{1}$.

\subsection{$\NIDbi$ principle}
\label{sec:NIDbi}
We introduce a symmetric variant of $\NID$, which 
 seems to be quite natural and useful in practice (cf.\ Section
\ref{sec:WeakEqlRel} and Section \ref{sec:EqlBP} below). 

\begin{definition}
  Let $(a, b)$ be a rule on a set $S$. A subset $\alpha
  \subseteq S$ is said to be \emph{biclosed} under $(a,b)$ if
  \[
    a \meets \alpha \iff b \meets \alpha.
  \]
  If $R$ is a set of rules on $S$, then a subset $\alpha \subseteq S$ 
  is said to be \emph{$R$-biclosed} if $\alpha$ is biclosed under
  every rule in
  $R$.
  Then, the principle $\NIDbi$ reads:
  \begin{description}
    \item[$\NIDbi$:]
    For each set $S$ and a set $R$ of rules on $S$, the class
    of $R$-biclosed subsets of S is set-generated.
  \end{description}
\end{definition}

\begin{proposition}
  $\NIDn{1}$ is equivalent to $\NIDbi$.
\end{proposition}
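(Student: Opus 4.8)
The plan is to prove both implications by a translation of rule sets that leaves the class of (bi)closed subsets \emph{literally} unchanged, so that a generating subset for one class is verbatim a generating subset for the other. The only real content is a pair of pointwise logical equivalences, plus a routine check that the translated rule sets are genuinely sets in $\ECST$.

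For $\NIDn{1} \implies \NIDbi$, I would start from a set $R$ of arbitrary rules on $S$ and unfold biclosedness into elementary closure conditions. Fix $(a,b) \in R$. Using the intuitionistic equivalence $\bigl( \exists z \in a\, (z \in \alpha) \bigr) \limplies Q \;\lequiv\; \forall z \in a\, \bigl( z \in \alpha \limplies Q \bigr)$, the half $a \meets \alpha \implies b \meets \alpha$ is equivalent to closedness of $\alpha$ under every elementary rule $(\{z\},b)$ with $z \in a$, and symmetrically $b \meets \alpha \implies a \meets \alpha$ unfolds into the elementary rules $(\{w\},a)$ with $w \in b$. Hence $\alpha$ is biclosed under $(a,b)$ exactly when it is closed under the elementary rule set $\{(\{z\},b) : z \in a\} \cup \{(\{w\},a) : w \in b\}$. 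Taking the union of these sets over $(a,b) \in R$ yields a set $R_1$ of elementary rules --- a set by Replacement (the image of $a$ under $z \mapsto (\{z\},b)$, and of $b$ under $w \mapsto (\{w\},a)$), Pairing, and Union --- whose $R_1$-closed subsets are precisely the $R$-biclosed subsets. Applying $\NIDn{1}$ to $R_1$ then supplies the required generating subset.

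For the converse $\NIDbi \implies \NIDn{1}$, I would encode each elementary rule as a single biclosed rule. Given a set $R$ of elementary rules, note that for $a = \{x\}$ closedness of $\alpha$ under $(a,b)$ is the one-way implication $x \in \alpha \implies b \meets \alpha$. The key observation is that this is exactly the content of biclosedness under $(b, b \cup a)$: since $(b \cup a) \meets \alpha \lequiv \bigl( b \meets \alpha \vee a \meets \alpha \bigr)$, the biconditional $b \meets \alpha \iff (b \cup a) \meets \alpha$ has a trivial forward direction and collapses to $a \meets \alpha \implies b \meets \alpha$, which for singleton $a$ is $x \in \alpha \implies b \meets \alpha$. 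Thus, setting $R' = \{ (b, b \cup a) : (a,b) \in R\}$, a set by Replacement, the $R'$-biclosed subsets coincide with the $R$-closed subsets, and $\NIDbi$ applied to $R'$ finishes the argument.

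The routine points are the verifications that $R_1$ and $R'$ are sets and that the generating subset transfers (immediate, since the two classes of subsets are equal, not merely isomorphic). The one genuinely inventive step --- which I expect to be the main obstacle --- is the encoding in the second direction: recognising that an asymmetric closure condition is captured by a symmetric biclosedness condition once the conclusion is enlarged to $b \cup a$, making one half of the biconditional vacuous. It is worth stressing that this works precisely because $a$ is a singleton, so that $a \meets \alpha$ and $a \subseteq \alpha$ coincide; for general $a$ the construction would instead capture $a \meets \alpha \implies b \meets \alpha$ rather than $a \subseteq \alpha \implies b \meets \alpha$, which is exactly why the correspondence pins down the elementary fragment $\NIDn{1}$.
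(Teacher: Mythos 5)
Your proposal is correct and follows essentially the same route as the paper: the first direction unfolds each biclosedness condition into the elementary rules $(\{z\},b)$ for $z\in a$ and $(\{w\},a)$ for $w\in b$, and the second encodes each elementary rule as a single biclosedness condition whose one half is vacuous by enlarging one side to the union. The only cosmetic difference is that you write the encoding rule as $(b,\, b\cup a)$ where the paper writes $(a\cup b,\, b)$, which is the same condition since biclosedness is symmetric in the two components.
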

\begin{proof}
  Assume $\NIDn{1}$, and let $R$ be a set of rules on a set $S$. Define a set $R'$ of
elementary rules on $S$ by
\[ 
  R' \defeql \left\{   (\{x\},b) \mid (a,b) \in R, x \in a \} \cup \{ (\{y\},a)
\mid (a,b) \in R, y \in b \right\}.
\]
Then, a subset $\alpha \subseteq S$ is $R$-biclosed if and only if it is $R'$-closed.
This proves one direction.

Conversely, assume $\NIDbi$, and let $R$ be a set of elementary rules on a set $S$.
Define another set $R'$ of rules on $S$ by
\[
  R' \defeql \left\{ (a \cup b, b) \mid (a,b) \in R\right\}.
\]
Then, a subset $\alpha \subseteq S$ is $R$-closed if and only if it is
$R'$-biclosed. 
\end{proof}

\subsection{Weak equalisers in the category of sets and relations}
\label{sec:WeakEqlRel}
We show that $\NIDn{1}$ is equivalent to the
existence of weak equalisers in the category of sets and relations. 

Let $\Rel$ be the category of sets and relations between them:
the identity on a set $X$ is a diagonal relation 
\[
  \id_{X} \defeql \left\{ (x,x) \mid x \in X \right\},
\]
and the composition of morphisms is the relational composition.

\begin{definition}
  Let $f, g \colon A \to B$ be a parallel pair of morphisms in a category
  $\mathbb{C}$.
  A \emph{weak equaliser} of $f$ and $g$ is an object $E$ together
  with a morphism $e \colon E \to A$ such that
  \begin{enumerate}
    \item $f \circ e = g \circ e$,
    \item for any morphism $h \colon C \to A$ such that $f \circ h = g
      \circ h$, there exists a morphism $\overline{h} \colon C \to E$
      such that $e \circ \overline{h} = h$.
  \end{enumerate}
\end{definition}

\begin{proposition}
  \label{prop:NIDbiRelWEq}
  The following are equivalent over $\ECST$:
  \begin{enumerate}
    \item\label{prop:NIDbiRelWEq1} $\NIDbi$;
    \item\label{prop:NIDbiRelWEq2} $\Rel$ has weak equalisers.
  \end{enumerate}
\end{proposition}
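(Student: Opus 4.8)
The plan is to set up a dictionary between morphisms of $\Rel$ and families of subsets, under which the equalising condition becomes biclosedness and the weak universal property becomes set-generation. For a relation $h \colon C \to A$ and $c \in C$, write $h_{c} \defeql \{a \in A \mid (c,a) \in h\}$ for its $c$-th row, and for $\alpha \subseteq A$ write $f[\alpha] \defeql \{b \in B \mid \exists a \in \alpha\,(a,b) \in f\}$ for the direct image along $f \colon A \to B$. Unfolding relational composition, $f \circ h = g \circ h$ holds iff $f[h_{c}] = g[h_{c}]$ for every $c \in C$; and, writing $f^{-1}b \defeql \{a \in A \mid (a,b)\in f\}$, this in turn holds iff each row $h_{c}$ is biclosed under every rule $(f^{-1}b, g^{-1}b)$ with $b \in B$. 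Thus, after fixing $f,g$, the relations that equalise them are exactly the families all of whose rows are $R$-biclosed for the rule set $R \defeql \{(f^{-1}b, g^{-1}b) \mid b \in B\}$.

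For \ref{prop:NIDbiRelWEq1}~$\metaimplies$~\ref{prop:NIDbiRelWEq2}, given $f,g \colon A \to B$ I would form the set $R$ above (a set by Replacement) and let $G$ be a generating subset of the class of $R$-biclosed subsets. Take $E \defeql G$ and let $e \colon E \to A$ be the membership relation $\{(\beta,a) \mid \beta \in G,\ a \in \beta\}$, so that $e_{\beta} = \beta$. Each row of $e$ is $R$-biclosed, so $e$ equalises $f$ and $g$ by the dictionary. Given any equalising $h \colon C \to A$, each row $h_{c}$ is $R$-biclosed, so set-generation yields $h_{c} = \bigcup\{\beta \in G \mid \beta \subseteq h_{c}\}$; defining $\overline{h} \defeql \{(c,\beta) \mid \beta \in G,\ \beta \subseteq h_{c}\}$ then gives $e \circ \overline{h} = h$, establishing the weak universal property.

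For \ref{prop:NIDbiRelWEq2}~$\metaimplies$~\ref{prop:NIDbiRelWEq1}, given a set $R$ of rules on $S$, I would run the dictionary backwards. Take $A \defeql S$, $B \defeql R$, and define $f,g \colon S \to R$ by $(s,r) \in f \defequiv s \in a_{r}$ and $(s,r) \in g \defequiv s \in b_{r}$, where $r = (a_{r},b_{r})$; both are sets (e.g.\ as the union over $r \in R$ of the sets $a_{r} \times \{r\}$, resp.\ $b_{r} \times \{r\}$). Then $f^{-1}r = a_{r}$ and $g^{-1}r = b_{r}$, so by the dictionary a subset $\alpha \subseteq S$ is $R$-biclosed iff the relation $\One \times \alpha \colon \One \to S$ equalises $f,g$. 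Let $(E,e)$ be a weak equaliser. Its rows are $R$-biclosed, and I claim $G \defeql \{e_{p} \mid p \in E\}$ (a set by Replacement) generates the $R$-biclosed subsets: given $R$-biclosed $\alpha$ and $x \in \alpha$, the relation $h \defeql \One \times \alpha$ equalises $f,g$, so the factorisation $e \circ \overline{h} = h$ gives $\alpha = h_{0} = \bigcup_{p \in \overline{h}_{0}} e_{p}$, and picking $p \in \overline{h}_{0}$ with $x \in e_{p}$ yields $\beta \defeql e_{p} \in G$ with $x \in \beta \subseteq \alpha$.

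The routine checks are that the various relations and rule sets are genuine sets in $\ECST$ (Replacement, Restricted Separation, and products suffice) and that relational composition unwinds as claimed. The conceptual heart, and the step I would be most careful about, is the dictionary itself: verifying that $f \circ h = g \circ h$ is equivalent, row by row, to biclosedness under the rules $(f^{-1}b, g^{-1}b)$, and that the weak factorisation property corresponds precisely to expressing each biclosed row as a union of generating rows. Getting the direction of relational composition right, so that rows, images, and preimages line up, is where an error would most easily creep in.
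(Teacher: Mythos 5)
Your proposal is correct and follows essentially the same route as the paper: the rule set $\{(f^{-1}b,\, g^{-1}b) \mid b \in B\}$, the membership relation on a generating subset as the weak equaliser, and the singleton-domain relation encoding an $R$-biclosed subset are all exactly the paper's constructions, with your ``dictionary'' merely making explicit the row-by-row unwinding of relational composition that the paper performs inline. No gaps.
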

\begin{proof}
  (\ref{prop:NIDbiRelWEq1} $\rightarrow$ \ref{prop:NIDbiRelWEq2}) Assume $\NIDbi$, and let $r_{1}, r_{2} \subseteq X
  \times Y$
  be a parallel pair of relations. Consider a class
  \[
    \mathcal{E} \defeql \left\{ U \in \pow(X) \mid r_{1}U =
    r_{2}U \right\},
  \]
  where
    $
    r_{i}U \defeql \left\{ y \in Y \mid  \exists x \in U \left(
      x \mathrel{r_{i}} y \right)\right\} \; (i = 0,1)
    $.
  Define a set of rules on $X$ by
  \[
    R \defeql \left\{ (r_{1}^{-}y, r_{2}^{-}y ) \mid  y \in Y
  \right\},
  \]
  where
    $
    r_{i}^{-}y \defeql \left\{ x \in X \mid 
      x \mathrel{r_{i}} y \right\} \; (i = 0,1)
    $.
    Since
  \begin{align*}
    U \in \mathcal{E} 
    \iff
     \forall y \in Y  \left( y \in r_{1}U \leftrightarrow y \in r_{2}U \right) 
    \iff
     \forall y \in Y \left(r_{1}^{-}y  \meets U
    \leftrightarrow  r_{2}^{-}y  \meets U \right),
  \end{align*}
  $\mathcal{E}$ is the class of $R$-biclosed subsets. Thus
  $\mathcal{E}$ has a generating subset $G$  by $\NIDbi$.
  Define a relation $r \subseteq G \times X$ by 
  \[
    U \mathrel{r} x \defequiv x \in U.
  \]
  Clearly, we have $r_{1} \circ r = r_{2} \circ r$.
  We show that $r$ is a weak equaliser of
  $r_{1}$ and~$r_{2}$. Let $s \subseteq Z \times X$ be a relation such that
  $r_{1} \circ s = r_{2} \circ s $.
  Define a relation $\overline{s} \subseteq Z \times G$ by
  \[
    z \mathrel{\overline{s}} U \defequiv U \subseteq sz,
  \]
  where
  $
  sz \defeql \left\{ x \in X \mid 
    z \mathrel{s} x\right\}
  $.
  Obviously, we have $r \circ \overline{s} \subseteq s$. Conversely,
  suppose that $z \mathrel{s} x$. Since $sz$ is an
  $R$-biclosed subset of $X$, there exists a $U \in G$ such that $x
  \in U \subseteq sz$.  Thus $z \mathrel{(r \circ
    \overline{s})} x$, and so $s \subseteq r \circ \overline{s}$.
  Hence $s = r \circ \overline{s}$.
  \smallskip

 \noindent (\ref{prop:NIDbiRelWEq2} $\rightarrow$ \ref{prop:NIDbiRelWEq1})
  Assume that $\Rel$ has weak equalisers, and 
  let $R$ be a set of rules on a set $S$.  Then, $R$ corresponds
  to two relations 
  $r_{1}, r_{2} \subseteq  S \times R$ given by
  \begin{align*}
    x \mathrel{r_{1}} (a,b) &\defequiv x \in a, \\
    x \mathrel{r_{2}} (a,b) &\defequiv x \in b.
  \end{align*}
  Let $r \subseteq E \times S$ be a weak equaliser of $r_{1}$ and
  $r_{2}$ in $\Rel$, and put
  \[
    G \defeql \left\{ re \mid e \in E \right\}.
  \]
  Since $r_{1} \circ r = r_{2} \circ r$, elements of $G$ are
  $R$-biclosed subsets of $S$. We show that $G$ generates
  the class of $R$-biclosed subsets.
  Let $\alpha \subseteq S$ be an arbitrary $R$-biclosed subset,
  and let $z \in \alpha$. Define a relation 
  $r_{\alpha} \subseteq \left\{ * \right\} \times S$ by
  \[
    * \mathrel{r_{\alpha}} x \defequiv x \in \alpha,
  \]
  where $\left\{ * \right\}$ is a fixed one-element set.
  Since $r_{\alpha} * = \alpha$
  and $\alpha$ is $R$-biclosed, we have 
  $r_{1} \circ r_{\alpha} = r_{2} \circ r_{\alpha}$.
  Thus, $r_{\alpha}$ factors through $r$ via some relation
  $\overline{r_{\alpha}} \subseteq \left\{ * \right\} \times E$. 
  Since $* \mathrel{r_{\alpha}} z$, there
  exists an $e \in E$ such that $* \mathrel{\overline{r_{\alpha}}} e$
  and $e \mathrel{r} z$.  Then, for any $y \in re$,  we have $*
  \mathrel{(r \circ \overline{r_{\alpha}})} y$, i.e., $*
  \mathrel{r_{\alpha}} y$, and so $y \in \alpha$. Hence $z \in re
  \subseteq \alpha$.
\end{proof}

\subsection{Equalisers in the category of basic pairs}
\label{sec:EqlBP}
We show that $\NIDn{1}$ is equivalent to the
existence of equalisers in the category of basic pairs described 
in the forthcoming book by Sambin~\cite{Sambin_BP_book}. The result in 
this subsection refines the result by
Ishihara and Kawai \cite[Proposition~3.8]{CompletenessOfBPandCSpa}, where
they showed that the category of basic pairs has coequalisers using
$\SGA$ (cf.\ Remark \ref{rem:RelBPSelfDual}).

\begin{definition}
A \emph{basic pair} is a triple $(X,\Vdash,S)$ where
$X$ and $S$ are sets, and $\Vdash$ is a relation from $X$ to $S$.
A \emph{relation pair} between basic pairs $\mathcal{X}_1 =
(X_1,\Vdash_1,S_1)$ and $\mathcal{X}_2 = (X_2,\Vdash_2,S_2)$ is a pair
$(r,s)$ of relations $r \subseteq X_1 \times X_2$ and $s \subseteq S_1
\times S_2$ such that
$
{\Vdash_2} \circ r = s \circ {\Vdash_1},
$
i.e., the following diagram commutes in $\Rel$:
\[
  \begin{tikzcd}
     S_1 \arrow[from=d, "\Vdash_1"] \arrow[r,"s"]
     &
     S_2 \arrow[from=d,"\Vdash_2"']
     \\
     X_1
     \ar[r,"r"]
     &
     X_2
  \end{tikzcd}
\]
Two relation pairs $(r_1,s_1)$ and $(r_2,s_2)$ between basic pairs
$\mathcal{X}_1$ and $\mathcal{X}_2$ are said to be \emph{equivalent} (or
\emph{equal}) if
\begin{equation}
  \label{eq:RelPairEqual}
  {\Vdash_2} \circ {r_1} = {\Vdash_2} \circ {r_2}.
\end{equation}
In this case, we write ${(r_1,s_1)} \sim {(r_2,s_2)}$.

Basic pairs and  relation pairs with equality defined by
\eqref{eq:RelPairEqual} form a
category $\BP$: the identity on a basic pair $\mathcal{X} = (X,
\Vdash, S)$ is $(\id_{X}, \id_{S})$ and the composition of two
relation pairs $(r, s) \colon \mathcal{X}_{1} \to \mathcal{X}_{2}$ and
$(u,v) \colon \mathcal{X}_{2} \to \mathcal{X}_{3}$ is $(u
\circ r, v \circ s)$, where each component is a relational
composition. It is easy to check that compositions respect
equality of relation pairs;
see \cite[Proposition 2.3]{CompletenessOfBPandCSpa} for more details.
\end{definition}

\begin{proposition}
  \label{prop:RelWEqBPEq}
  The following are equivalent over $\ECST$:
  \begin{enumerate}
    \item\label{prop:RelWEqBPEq1} $\Rel$ has weak equalisers;
    \item\label{prop:RelWEqBPEq2} $\BP$ has equalisers.
  \end{enumerate}
\end{proposition}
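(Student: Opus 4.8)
The plan is to prove the two directions separately, exploiting the fact that a basic pair is essentially a single morphism in $\Rel$ (the relation ${\Vdash} \subseteq X \times S$) and that equality of relation pairs is measured only by the composite with $\Vdash_2$. The key structural observation is that the equaliser condition in $\BP$, when unwound through the definitions, should reduce to a weak equaliser condition in $\Rel$, since the quotienting by $\sim$ turns the uniqueness requirement for equalisers into the mere existence requirement for weak equalisers.

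For the direction (\ref{prop:RelWEqBPEq1} $\rightarrow$ \ref{prop:RelWEqBPEq2}), I would take a parallel pair of relation pairs $(r_1,s_1),(r_2,s_2) \colon \mathcal{X}_1 \to \mathcal{X}_2$ and construct the equaliser from weak equalisers in $\Rel$. The natural candidate is to form the basic pair on the weak equaliser objects: take a weak equaliser $e \colon E \to X_1$ of $r_1$ and $r_2$ in $\Rel$ (noting $r_1,r_2$ need not be equal as morphisms, but the relevant condition involves $\Vdash_2 \circ r_i$), and similarly on the $S$-components, then equip $E$ with a suitable forcing relation so that the inclusion becomes a relation pair. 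The main work is to choose the forcing relation on $E$ so that the equaliser property in $\BP$ holds, i.e.\ every relation pair $(h,k)\colon \mathcal{C}\to\mathcal{X}_1$ equalising $(r_1,s_1)$ and $(r_2,s_2)$ (meaning $\Vdash_2 \circ r_1 \circ h = \Vdash_2 \circ r_2 \circ h$) factors uniquely up to $\sim$ through the candidate equaliser. I expect the unique factorisation — rather than mere existence — to require care: I would recover uniqueness up to $\sim$ from the weak universal property by composing with $\Vdash$, since $\sim$ collapses precisely the ambiguity that the weak equaliser leaves open.

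For the converse (\ref{prop:RelWEqBPEq2} $\rightarrow$ \ref{prop:RelWEqBPEq1}), I would encode a parallel pair $r_1, r_2 \colon X \to Y$ in $\Rel$ as a parallel pair of relation pairs between suitable basic pairs, take their equaliser in $\BP$, and read off a weak equaliser in $\Rel$ from one component of the equalising relation pair. The cleanest encoding is likely to view a set $X$ as the basic pair $(X, \id_X, X)$ (or with a one-point $S$-component), so that relation pairs into it correspond to plain relations, and the equaliser condition matches the $\Rel$-equaliser condition; the loss of strictness built into $\sim$ then delivers exactly a \emph{weak} equaliser rather than a strict one.

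The main obstacle, in both directions, will be bookkeeping around the equivalence relation $\sim$ on relation pairs: equality in $\BP$ is not genuine equality of the underlying pairs but equality after composing with $\Vdash_2$, so I must verify throughout that the constructed factorisations are well-defined and unique \emph{modulo} $\sim$, and that this weakening is exactly what downgrades strict equalisers in $\BP$ to weak equalisers in $\Rel$. I would carry out the whole argument by systematically translating each categorical condition into a statement about composites of relations in $\Rel$, reducing everything to the manipulation of the images $\Vdash \circ (-)$, where the correspondence between the two universal properties becomes transparent.
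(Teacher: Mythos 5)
Your plan is essentially the paper's proof (the Freyd-completion argument for $\Rel$): in the forward direction the paper weak-equalises the composites ${\Vdash_2} \circ r_1$ and ${\Vdash_2} \circ r_2$ rather than $r_1,r_2$ themselves, equips $E$ with the forcing ${\Vdash_1} \circ e$ while leaving the $S$-component as $S_1$ with $\id_{S_1}$ (no weak equaliser on the $S$-side is needed or wanted, since $\sim$ ignores that component entirely), and uniqueness up to $\sim$ then comes out automatically, exactly as you predict. The converse via the basic pairs $(X,\id_X,X)$ and the diagonal relation pairs $(r_i,r_i)$, reading off the first component of the equaliser, is precisely the paper's argument.
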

\begin{proof}
  (\ref{prop:RelWEqBPEq1} $\rightarrow$ \ref{prop:RelWEqBPEq2}) 
  This follows from a general result on the \emph{Freyd completion of
  categories}
  \cite[Section 2.5 (b)]{GrandisFreydCompletion}. We recall the proof
  for the particular case of
  $\Rel$.  Assume that $\Rel$ has weak equalisers.
  Let $(r_{1}, s_{1}), (r_{2},s_{2}) \colon \mathcal{X}_{1} \to
  \mathcal{X}_{2}$ be relation pairs between
  basic pairs $\mathcal{X}_{1} = (X_{1}, \Vdash_{1}, S_{1})$
  and $\mathcal{X}_{2} = (X_{2}, \Vdash_{2}, S_{2})$.
  Put 
    \begin{align*}
      u_{1} &\defeql { \Vdash_{2} } \circ r_{1},  &
      u_{2} &\defeql { \Vdash_{2} } \circ r_{2}.
    \end{align*}
  Let $e \colon E \to
  X_{1}$ be a weak equaliser of $u_{1}$ and $u_{2}$ in $\Rel$.
  Consider a basic pair $\mathcal{E} = (E, \Vdash_1 \mathrel{\circ} {e},
  S_{1})$. Then $(e, \id_{S_{1}})$ is a relation pair from
  $\mathcal{E}$ to $\mathcal{X}_{1}$, and we have
  $(r_{1},s_{1}) \circ (e, \id_{S_{1}}) 
  \sim (r_{2},s_{2}) \circ (e, \id_{S_{1}})$; see the diagram below.
  \[
    \begin{tikzcd}[sep=huge]
      S_{1}
      \arrow[r,"\id_{S_{1}}"]
      &
      S_{1}
      \arrow[r,shift left, "s_{2}"]
      \arrow[r,shift right, "s_{1}"']
      &
      S_{2} \\
      E
      \arrow[u,"\Vdash_{1} \circ e"]
      \arrow[r,"e"]
      & 
      X_{1}
      \arrow[u,"\Vdash_1"]
      \arrow[r,shift left, "r_{2}"]
      \arrow[r,shift right, "r_{1}"']
      \arrow[ru,shift left, "u_{2}", shorten >=3pt,shorten <=-2pt]
      \arrow[ru,shift right, "u_{1}"',, shorten >=3pt,shorten <=-2pt]
      &
      X_{2}
      \arrow[u,"\Vdash_2"']
    \end{tikzcd}
  \]
  We show that $(e, \id_{S_{1}}) \colon \mathcal{E} \to
  \mathcal{X}_{1}$ is an equaliser of $(r_{1},s_{1})$ and
  $(r_{2},s_{2})$. Let $\mathcal{Z} = (Z, \Vdash, T)$ be a basic
  pair, and let $(u,v) \colon \mathcal{Z} \to \mathcal{X}_{1}$
  be a relation pair such that 
  $(r_{1},s_{1}) \circ (u,v) \sim (r_{2},s_{2}) \circ (u,v)$.
  Then, $u_{1} \circ u = u_{2} \circ u$ in $\Rel$, so there exists
  a relation $\overline{u} \subseteq Z \times E$ such that $e \circ
  \overline{u} = u$. Then, $(\overline{u}, v)$ is a relation
  pair from $\mathcal{Z}$ to $\mathcal{E}$. It is also straightforward
  to check that $(e, \id_{S_{1}}) \circ (\overline{u}, v) \sim  (u,v)$
  and that $(\overline{u},v)$ is a unique relation pair from
  $\mathcal{Z}$ to $\mathcal{E}$ with this property (cf.\ the diagram
  below).
  \[
    \begin{tikzcd}[row sep=scriptsize,column sep=large]
      T
      \arrow[rr, "v"]
      \arrow[rd, "v"']
      &&
      S_{1}
      \\[-8pt]
      &
      S_{1} 
      \arrow[ru, "\id_{S_{1}}"',shorten >=3pt,shorten <=-2pt,pos=0.4]
      &
      \\
      Z
      \arrow[uu,"\Vdash"]
      \arrow[rr,"u", near start]
      \arrow[rd,"\overline{u}"']
      && 
      X_{1}
      \arrow[uu,"\Vdash_{1}"']
      \\[-8pt]
      &
      E
      \arrow[ru, "e"',shorten >=3pt,shorten <=-2pt]
      \arrow[uu,"\Vdash_{1} \circ e"',crossing over,pos=0.65]
      &
    \end{tikzcd}
  \]

  \smallskip
  \noindent (\ref{prop:RelWEqBPEq2} $\rightarrow$ \ref{prop:RelWEqBPEq1})
  In the following, we write $S_{\Delta}$ 
  for the basic pair $(S, \id_{S}, S)$ on a set $S$.
  Assume that $\BP$ has equalisers, and 
  let $r_{1},r_{2} \subseteq X \times Y$ be a parallel pair of
  relations.
  Then, $(r_{1},r_{1})$ and $(r_{2},r_{2})$ are relation
  pairs from  $X_{\Delta}$ to $Y_{\Delta}$.
  Let $(r,s) \colon \mathcal{E} \to
  X_{\Delta}$ be an equaliser of $(r_{1}, r_{1})$ and 
  $(r_{2}, r_{2})$ in $\BP$, and write $\mathcal{E} = (E,\Vdash, S)$.
  We show that $r$ is a weak equaliser of $r_{1}$ and
  $r_{2}$ in $\Rel$. First, since $(r_{1},r_{1}) \circ (r,s) \sim
  (r_{2},r_{2}) \circ (r,s) $, we have $r_{1} \circ r = r_{2} \circ r$.
  Next, let $u \subseteq Z \times X$ be a relation such that $r_{1} \circ u
  = r_{2} \circ u$. Then, $(u,u)$ is a relation pair from
  $Z_{\Delta}$ to $X_{\Delta}$, and we have  $(r_{1}, r_{1}) \circ
  (u,u) \sim (r_{2},r_{2}) \circ (u,u)$. Thus, there exists a unique
  relation pair $(v,w) \colon  Z_{\Delta} \to \mathcal{E}$ such
  that $(r,s) \circ (v,w) \sim (u,u)$. Then, $r \circ v = u$.
\end{proof}

\begin{remark}\label{rem:RelBPSelfDual}
The categories $\Rel$ and $\BP$ are self dual, i.e., $\Rel$ is
equivalent to its opposite $\Rel^{\op}$ (and similarly for $\BP$).
Thus, $\Rel$ has weak equalisers if and only if it has weak coequalisers,
and $\BP$ has equalisers if and only if it has  coequalisers.
Moreover, since $\Rel$ has small products and hence small coproducts as
well, $\BP$ has small products and coproducts~\cite[Section 2.5
(a)]{GrandisFreydCompletion} (see also
\cite[Proposition~3.2]{CompletenessOfBPandCSpa}).
Hence, the following are equivalent over $\ECST$:
  \begin{enumerate}
    \item $\BP$ has (co)equalisers;
    \item $\BP$ is (co)complete.
  \end{enumerate}
\end{remark}

We summarise the equivalents of $\NIDn{1}$.
\begin{theorem}
  \label{thm:NID1}
The following are equivalent over $\ECST$:
\begin{enumerate}
\item $\NIDn{1}$;
\item $\NIDbi$;
\item $\Rel$ has weak (co)equalisers;
\item $\BP$ has (co)equalisers;
\item $\BP$ is complete and cocomplete.
\end{enumerate}
\end{theorem}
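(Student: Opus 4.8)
The plan is to prove the theorem purely by concatenating the equivalences already established, arranging them as a single chain $(1)\Leftrightarrow(2)\Leftrightarrow(3)\Leftrightarrow(4)\Leftrightarrow(5)$ and then invoking the self-duality of $\Rel$ and $\BP$ to absorb the parenthetical ``(co)''. The substantive mathematics is all contained in the earlier results, so the argument is essentially bookkeeping; I would nonetheless be explicit about which citation discharges which link.

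First I would record the chain. The equivalence of (1) and (2) is exactly the Proposition of Section~\ref{sec:NIDbi}. The equivalence of (2) with ``$\Rel$ has weak equalisers'' is Proposition~\ref{prop:NIDbiRelWEq}, and the equivalence of the latter with ``$\BP$ has equalisers'' is Proposition~\ref{prop:RelWEqBPEq}. Finally, the equivalence between ``$\BP$ has (co)equalisers'' and ``$\BP$ is complete and cocomplete'' is the concluding statement of Remark~\ref{rem:RelBPSelfDual}. Chaining these gives the theorem once the ``(co)'' qualifiers are justified.

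Next I would discharge those qualifiers using the self-duality recorded in Remark~\ref{rem:RelBPSelfDual}. Since $\Rel \simeq \Rel^{\op}$, the existence of weak equalisers in $\Rel$ is equivalent to the existence of weak coequalisers, so ``$\Rel$ has weak (co)equalisers'' in item~(3) denotes a single well-defined property; likewise $\BP \simeq \BP^{\op}$ makes ``$\BP$ has (co)equalisers'' in item~(4) unambiguous. For item~(5), the same Remark observes that $\Rel$ has all small products, whence $\BP$ has all small products and coproducts; combining these with (co)equalisers yields (co)completeness, which is precisely the passage from (4) to (5) stated there.

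I expect no genuine obstacle, since the hard work is done in Propositions~\ref{prop:NIDbiRelWEq} and~\ref{prop:RelWEqBPEq} and in Remark~\ref{rem:RelBPSelfDual}. The only point demanding care is to ensure that every move from ``equaliser'' to ``(co)equaliser'' is justified by self-duality rather than reproved, and that item~(5) is obtained from item~(4) via the already-noted existence of small (co)products rather than re-established from scratch. With these citations marshalled in order, the proof reduces to a single concatenation of the stated equivalences over $\ECST$.
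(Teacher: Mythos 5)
Your proposal is correct and matches the paper's own treatment exactly: the theorem is stated as a summary with no separate proof, being precisely the concatenation of the $\NIDn{1}$--$\NIDbi$ equivalence, Proposition~\ref{prop:NIDbiRelWEq}, Proposition~\ref{prop:RelWEqBPEq}, and Remark~\ref{rem:RelBPSelfDual} for the self-duality and (co)completeness steps. Nothing is missing.
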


\section{Finitary $\NID$} \label{sec:NIDf}
In this section, we give some statements equivalent to $\NIDf$.

\subsection{Models of geometric theories}
\label{sec:SGM}
The connection between $\NID$ principles and set-generation of
the class of models of a game theory was studied by van den Berg
\cite[Section 4]{vandenBergNID}. In particular, he showed that
$\NIDf$ is equivalent to the statement that the class
of models of any finitary game theory is set-generated~\cite[Corollary
4.4]{vandenBergNID}.  Since geometric theories form a subclass of
finitary game theories, the result in this subsection follows from
his result. However, we provide a small refinement, which serves as
a stepping-stone for Section \ref{sec:n-aryNID}.

\begin{definition}
  A (propositional) \emph{geometric
  theory} over a set $S$ is a set of \emph{axioms} of the  form
  \[
    \medwedge A \vdash \bigvee_{i \in I} \medwedge B_i
  \]
  where $I$ is a set and $A, B_i$ are finitely enumerable subsets of 
  $S$. 
    If $T$ is a geometric theory over $S$, a \emph{model} of $T$ is
    a subset $m \subseteq S$ such that
  \[
    A \subseteq m \implies  \exists i \in I \left(B_i \subseteq
    m \right)
  \]
  for each axiom $\medwedge A \vdash \bigvee_{i \in I} \medwedge B_i$
  in $T$.  The class of models of $T$ is denoted by $\Model{T}$.
\end{definition}

\begin{definition}
Let $\NIDn{\leq 2}$ be the principle obtained from $\NID$ by
restricting the set $R$ to those rules
$(a,b)$ where $a$ is a surjective image of
$\{0,\dots,n-1\}$ for some  $n \leq 2$.  
\end{definition}
\begin{proposition}
  \label{prop:NIDfSGM}
  The following are equivalent over $\ECST$:
  \begin{enumerate}
    \item\label{prop:NIDfSGM1} $\NIDn{\leq 2}$;
    \item\label{prop:NIDfSGM2} $\NIDf$;
    \item\label{prop:NIDfSGM3} The class of models of any geometric theory is set-generated.
  \end{enumerate}
\end{proposition}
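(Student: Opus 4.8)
The plan is to establish the cycle (\ref{prop:NIDfSGM2}) $\to$ (\ref{prop:NIDfSGM1}) $\to$ (\ref{prop:NIDfSGM3}) $\to$ (\ref{prop:NIDfSGM2}). Two of these are essentially immediate. For (\ref{prop:NIDfSGM2}) $\to$ (\ref{prop:NIDfSGM1}), observe that any rule whose premise is a surjective image of $\{0,\dots,n-1\}$ with $n \leq 2$ is in particular finitary, so every instance of $\NIDn{\leq 2}$ is already an instance of $\NIDf$. For (\ref{prop:NIDfSGM3}) $\to$ (\ref{prop:NIDfSGM2}), given a set $R$ of finitary rules on $S$, I would send each $(a,b) \in R$ to the geometric axiom $\medwedge a \vdash \bigvee_{z \in b} \medwedge \{z\}$, which is legitimate since $a$ is finitely enumerable and each $\{z\}$ is a singleton. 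The resulting geometric theory $T$ satisfies $\Model{T} = \{\alpha \in \pow(S) \mid \alpha \text{ is } R\text{-closed}\}$, because $a \subseteq m$ implies $\exists z \in b\,(\{z\} \subseteq m)$ is exactly $a \subseteq m \implies b \meets m$; hence set-generation of $\Model{T}$ is precisely $\NIDf$ for $R$. This direction needs only $\ECST$.

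The substantial direction is (\ref{prop:NIDfSGM1}) $\to$ (\ref{prop:NIDfSGM3}). The difficulty is that an axiom $\medwedge A \vdash \bigvee_{i \in I} \medwedge B_i$ carries finite conjunctions $\medwedge A$ and $\medwedge B_i$ of unbounded size, whereas $\NIDn{\leq 2}$ admits only premises of size at most two. My plan is to introduce for every finitely enumerable $\tau \in \fin(S)$ a fresh token $d_\tau$ intended to denote $\medwedge \tau$, working on the set $S' \defeql S \sqcup \fin(S)$; this is a set because $\NIDn{\leq 2}$ subsumes $\NIDn{0}$, hence yields Fullness and $\FPA$ by Remark~\ref{rem:NID0Fullness}, so that $\fin(S)$ is a set. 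I would then impose three families of rules on $S'$: the nullary rule $(\emptyset, \{d_\emptyset\})$; the binary introduction rules $(\{d_\tau, x\}, \{d_{\tau \cup \{x\}}\})$ for $\tau \in \fin(S)$ and $x \in S$; and the elementary elimination rules $(\{d_\tau\}, \{x\})$ for $x \in \tau$. Every premise has size at most two. The key lemma is that any subset $\alpha \subseteq S'$ closed under these rules satisfies $d_\tau \in \alpha \iff \tau \subseteq \alpha \cap S$: the forward direction is given by the elimination rules, while the converse follows by folding along any enumeration of $\tau$ through the introduction rules, the enumeration being used only in the verification so that no choice is needed to set up the rules. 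Finally, for each axiom I add the elementary rule $(\{d_A\}, \{d_{B_i} \mid i \in I\})$, whose closure condition, via the key lemma, says exactly that $A \subseteq \alpha \cap S$ entails $B_i \subseteq \alpha \cap S$ for some $i \in I$.

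Writing $R$ for the whole set of rules and $T$ for the given geometric theory, the key lemma yields a bijection between the $R$-closed subsets $\alpha$ of $S'$ and the models $m = \alpha \cap S$ of $T$, with inverse $m \mapsto m \cup \{d_\tau \mid \tau \in \fin(m)\}$. I would then apply $\NIDn{\leq 2}$ to obtain a generating subset $G'$ of the class of $R$-closed subsets and put $G \defeql \{\beta \cap S \mid \beta \in G'\}$, a set by Replacement. Since each $\beta \in G'$ is itself $R$-closed, $\beta \cap S$ is a model of $T$, so $G \subseteq \Model{T}$; and given $m \in \Model{T}$ with $x \in m$, applying the generation property of $G'$ to $x \in \alpha_m$ (where $\alpha_m$ is the $R$-closed set corresponding to $m$) produces $\beta \in G'$ with $x \in \beta \subseteq \alpha_m$, whence $\beta \cap S \in G$ satisfies $x \in \beta \cap S \subseteq m$. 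This establishes that $\Model{T}$ is set-generated.

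The main obstacle is the encoding step in (\ref{prop:NIDfSGM1}) $\to$ (\ref{prop:NIDfSGM3}): one must verify that the introduction and elimination rules pin down each auxiliary token $d_\tau$ functionally, i.e. the biconditional $d_\tau \in \alpha \iff \tau \subseteq \alpha \cap S$, without appealing to choice and while keeping every premise of size at most two, and then check that the passage $\beta \mapsto \beta \cap S$ simultaneously lands in $\Model{T}$ and transports the generation property. The remainder is routine bookkeeping confirming that $S'$, $R$, and $G$ are genuinely sets in $\ECST + \FPA$.
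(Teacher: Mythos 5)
Your proposal is correct and follows essentially the same route as the paper: the two easy implications are identical, and for $\NIDn{\leq 2}\rightarrow$ set-generation of $\Model{T}$ you use the same core device of encoding finite conjunctions via $\fin(S)$ (which is a set since $\NIDn{\leq 2}$ yields $\FPA$), with one nullary rule, a binary rule to build up finite sets, elementary projection rules, and one elementary rule per axiom, followed by the same induction-along-an-enumeration argument and transfer of the generating subset. The only cosmetic difference is that the paper takes $\fin(S)$ itself as the carrier and uses the mutually inverse maps $\alpha\mapsto\bigcup\alpha$ and $m\mapsto\fin(m)$, whereas you work on $S\sqcup\fin(S)$ with tokens $d_\tau$ and the retraction $\alpha\mapsto\alpha\cap S$; both encodings are sound.
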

\begin{proof}
  Clearly \ref{prop:NIDfSGM2} implies \ref{prop:NIDfSGM1}.
  The equivalence of \ref{prop:NIDfSGM2} and \ref{prop:NIDfSGM3} is a corollary of van den Berg \cite[Corollary
  4.4]{vandenBergNID}. We  give a proof for the sake of
  completeness. 
  \smallskip

  \noindent (\ref{prop:NIDfSGM1} $\rightarrow$ \ref{prop:NIDfSGM3}) 
  Assume $\NIDn{\leq 2}$. Let $T$ be a geometric theory over
  a set $S$. Define a set $R$ of rules
  on $\fin(S)$ by
  \begin{align}
    R \defeql&
    \left\{ ( \emptyset, \left\{ \emptyset \right\} ) \right\}
    \label{R:empty} \\
    &\cup \left\{ (\left\{ A \right\}, \left\{ B \right\}) \mid B \subseteq A \right\} 
    \label{R:order}\\
   &\cup \left\{ (\left\{ A,B \right\}, \left\{ A \cup B \right\}) \mid
   A,B \in \fin(S) \right\}
    \label{R:join} \\
  &\cup \Bigl\{ \left(\left\{ A \right\}, \left\{ B_{i} \mid i \in I
\right\}\right)
  \mid \medwedge A \vdash \bigvee_{i \in I}\medwedge B_{i} \in T\Bigr\}. 
  \label{R:axiom}
  \end{align}
  Note that $\fin(S)$ is a set since $\NIDn{\leq 2}$ implies $\NIDn{0}$
  (cf.\ Remark \ref{rem:NID0Fullness}).
  Note also that $R$ consists of nullary and binary rules.

  Let $\mathcal{C}$ be the class of $R$-closed subsets. Define functions
  $\Phi \colon \mathcal{C} \to \Model{T}$
  and
  $\Psi \colon \Model{T} \to  \mathcal{C}$
  by 
  \begin{align*}
    \Phi(\alpha) &\defeql  \mathop{\cup} \alpha, &
    \Psi(m) &\defeql \fin(m).
  \end{align*}
  It is straightforward to show that these functions are well-defined.
  We show that they are inverses of each other. First,
  we have 
  \[
    \Phi(\Psi(m)) = \mathop{\cup} \Psi(m) = \mathop{\cup} \fin(m) = m
  \]
 for each $m \in \Model{T}$.
  Next, for each $\alpha \in \mathcal{C}$, we have
  $\alpha \subseteq \fin(\mathop{\cup} \alpha) = \Psi(\Phi(\alpha))$. Conversely, let $A \in
  \fin(\mathop{\cup} \alpha)$, and write $A = \left\{ x_{0},\dots,x_{n-1} \right\}$.
 For each $i<n$, there is a $B_i \in \alpha$ such that $x_i \in B_i$,
 so $\{x_i\} \in \alpha$ by \eqref{R:order}.
 Then $A=\left\{ x_0,\dots,x_{n-1} \right\} \in \alpha$
 by \eqref{R:join} and induction.
  Note that if $A =
  \emptyset$, then $A \in \alpha$ by \eqref{R:empty}.
  Thus, $\Psi(\Phi(\alpha)) = \alpha$ for each $\alpha \in \mathcal{C}$.
  By $\NIDn{\leq 2}$,  the class $\mathcal{C}$ has a generating subset
  $G$. Then, $\left\{ \Phi(\alpha) \mid \alpha \in G\right\}$ is a
  generating subset of $\Model{T}$.
  \smallskip

  \noindent(\ref{prop:NIDfSGM3} $\rightarrow$ \ref{prop:NIDfSGM2})
  Assume that the class of models of any geometric theory is
  set-generated, and 
  let $R$ be a set of finitary rules on a set $S$.
  We can identify each rule $(a, b)$  in $R$ with
  the following geometric axiom over $S$:
\[
  \medwedge a \vdash  \bigvee_{y \in b} y.
\]
Let $T_{R}$ be the geometric theory over $S$ with the
axioms of the above form for each rule  in $R$. Then, a model of
$T_{R}$ is just an $R$-closed subset of $S$.
Hence, the class of $R$-closed subsets is set-generated.
\end{proof}
\subsection{$n$-ary $\NID$}
\label{sec:n-aryNID}
By setting a uniform bound on the size of the premise of finitary
rules, we have countably many fragments of $\NIDf$.
\begin{definition}
  \label{def:n-ary}
Let $n \in \omega$.
A rule $(a, b)$ on a set $S$ is said to be  \emph{$n$-ary} if there exists a
surjection $f\colon \left\{ 0,\dots,n-1 \right\} \to a$.
The $n$-ary $\NID$, denoted by $\NIDn{n}$, is
the principle obtained from $\NID$ by restricting the set 
$R$ to $n$-ary rules.
\end{definition}
  
\begin{lemma}
  \label{lem:NIDleq2EquivNID2}
  $\NIDn{\leq 2}$ and $\NIDn{2}$ are equivalent
  over $\ECST$.
\end{lemma}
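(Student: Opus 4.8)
The plan is to prove the two implications separately, with the forward direction $\NIDn{\leq 2} \metaimplies \NIDn{2}$ being immediate and the converse carrying all the work. For the forward direction I would simply observe that every $2$-ary rule $(a,b)$---one whose premise $a$ admits a surjection from $\{0,1\}$---is in particular a rule of the kind allowed by $\NIDn{\leq 2}$ (take $n = 2 \leq 2$). Hence any set $R$ of $2$-ary rules is already an admissible input for $\NIDn{\leq 2}$, and the conclusion that the $R$-closed subsets are set-generated transfers verbatim.

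For the converse $\NIDn{2} \metaimplies \NIDn{\leq 2}$, the first remark is that a singleton $\{x\}$ is a surjective image of $\{0,1\}$ via the constant map, so every unary rule is already $2$-ary; thus the only rules in an $\NIDn{\leq 2}$-instance genuinely outside the scope of $\NIDn{2}$ are the nullary ones $(\emptyset,b)$, whose premise is not inhabited. A nullary rule imposes the \emph{unconditional} constraint $b \meets \alpha$, and the plan is to simulate this by a conditional ($2$-ary) rule whose premise is forced to hold. Concretely, given a set $R$ of rules on $S$ with premises of size at most $2$, I would adjoin a fresh point $\top \notin S$ (realised, if need be, by a disjoint union so that freshness holds in $\ECST$), set $S' \defeql S \cup \{\top\}$, and define a set $R'$ of $2$-ary rules on $S'$ consisting of: every unary or binary rule of $R$ unchanged; a rule $(\{\top\},b)$ for each nullary rule $(\emptyset,b) \in R$; and a \emph{forcing} rule $(\{x\},\{\top\})$ for each $x \in S$.

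The core of the argument is then a correspondence between $R$-closed subsets of $S$ and $R'$-closed subsets of $S'$: the map $\alpha \mapsto \alpha \cup \{\top\}$ sends $R$-closed subsets to $R'$-closed subsets containing $\top$, and conversely $\beta' \mapsto \beta' \cap S$ sends any $R'$-closed subset containing $\top$ back to an $R$-closed subset, the rules $(\{\top\},b)$ being exactly what recovers closure under the nullary rules. I would then apply $\NIDn{2}$ to $R'$ to obtain a generating subset $G'$ and set $G \defeql \{\beta' \cap S \mid \beta' \in G' \text{ and } \top \in \beta'\}$, which is a set by Restricted Separation (the condition $\top \in \beta'$ is atomic) together with Replacement.

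The step I expect to be the main obstacle---and the reason the forcing rules are included---is checking that $G$ simultaneously consists of $R$-closed sets and actually generates. The forcing rules resolve both points at once: given $R$-closed $\alpha$ and $x \in \alpha$, a generating $\beta' \in G'$ with $x \in \beta' \subseteq \alpha \cup \{\top\}$ contains the element $x \in S$, so the rule $(\{x\},\{\top\})$ guarantees $\top \in \beta'$; hence $\beta' \cap S$ is genuinely $R$-closed and belongs to $G$, while $x \in \beta' \cap S \subseteq \alpha$ supplies the required witness. Without such a device the generated sets need not contain $\top$, and the nullary constraints would fail to survive the passage back to $S$, which is precisely the difficulty the construction is designed to avoid.
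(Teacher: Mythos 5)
Your proof is correct, and its overall strategy coincides with the paper's: adjoin a fresh point to $S$, convert each nullary rule $(\emptyset,b)$ into the unary (hence $2$-ary) rule whose premise is the fresh point, apply $\NIDn{2}$, and pass back to $S$ by intersecting the generators containing the fresh point with $S$. The one genuine difference is your inclusion of the forcing rules $(\{x\},\{\top\})$ for $x\in S$, and this is not cosmetic: it repairs a gap in the paper's own argument. In the paper's version, $R'$ contains only the rules $(\{*_S\},b)$ and the inhabited-premise rules of $R$, and $H$ is taken to be $\{\alpha\cap S\mid \alpha\in G,\ *_S\in\alpha\}$; but the generating witness $\alpha\in G$ with $x\in\alpha\subseteq\beta\cup\{*_S\}$ produced in the last step need not contain $*_S$, so $\alpha\cap S$ need not lie in $H$ (nor need it satisfy the nullary constraints). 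A concrete instance: $S=\{0,1\}$, $R=\{(\emptyset,\{1\})\}$, $G=\{\{0\},\{1\},\{1,*_S\}\}$ is a legitimate generating set for the $R'$-closed subsets, yet the resulting $H=\{\{1\}\}$ fails to generate at $0\in\{0,1\}$. Your forcing rules guarantee that every $R'$-closed set meeting $S$ already contains $\top$, so the witness automatically lands in $G$; this is exactly the device needed, and your explanation of why it is needed identifies the right obstacle. The remaining details (unary rules being $2$-ary via the constant surjection, the triviality of the forward implication, and the sethood of $G$ via Restricted Separation and Replacement) are all handled correctly.
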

\begin{proof}
  It suffices to show that $\NIDn{2}$ implies $\NIDn{\leq 2}$. Assume
  $\NIDn{2}$, and let $R$ be a set of rules on $S$ consisting of
  nullary and binary rules. 
  Choose any set $*_{S}$ not in $S$, and 
  define a set  $R'$ 
  of binary rules on $S \cup \left\{ *_{S} \right\}$ by 
  \begin{align*}
    R' &\defeql
    \left\{ (\left\{ *_{S} \right\}, b) \mid (\emptyset, b) \in R
  \right\}
    \cup
    \left\{ (a, b) \in R \mid a \meets a \right\}.
  \end{align*}
  By $\NIDn{2}$, the class of $R'$-closed subsets of $S \cup \left\{
  *_{S} \right\}$ has a generating subset $G$. Put
  \[
    H \defeql \left\{ \alpha \cap S \mid \alpha \in G,
      *_{S} \in \alpha \right\}.
  \]
  We show that $H$ generates the class of $R$-closed subsets of $S$.
  First, let $\alpha \in G$ such that $*_{S} \in \alpha$.
  Consider any $(a,b) \in R$ such that $a \subseteq \alpha \cap S$.
  If $a = \emptyset$, then $b \meets (\alpha \cap S)$ because
  $*_{S} \in \alpha$.
  If $a$ is inhabited, then obviously $b \meets (\alpha \cap S)$. 
  Thus, the elements of $H$ are $R$-closed.
  Let $\beta$ be any $R$-closed subset of $S$, and let $x \in \beta$.
  Then, $\beta \cup \left\{ *_{S} \right\}$ is an $R'$-closed subset of
  $S \cup \left\{ *_{S} \right\}$. Thus, there exists an $\alpha \in
  G$ such that $x \in \alpha \subseteq \beta \cup \left\{ *_{S}
  \right\}$. Then, $x \in \alpha \cap S \subseteq \beta$. 
\end{proof}
Lemma \ref{lem:NIDleq2EquivNID2}, together with Proposition
\ref{prop:NIDfSGM},
yields the following.
\begin{proposition}
  The following are equivalent over $\ECST$:
  \begin{enumerate}
    \item $\NIDf$;
    \item $\NIDn{n} \;(n \geq 2)$.
  \end{enumerate}
\end{proposition}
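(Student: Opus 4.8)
The plan is to avoid any fresh inductive-definition construction and reduce the claim entirely to Lemma~\ref{lem:NIDleq2EquivNID2} and Proposition~\ref{prop:NIDfSGM}, which together already yield $\NIDf \iff \NIDn{\leq 2} \iff \NIDn{2}$ over $\ECST$. It therefore suffices, for each fixed $n \geq 2$, to sandwich $\NIDn{n}$ between $\NIDf$ and $\NIDn{2}$ in deductive strength; the equivalence of all these principles then follows by transitivity.

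First I would dispatch the easy direction $\NIDf \to \NIDn{n}$. By Definition~\ref{def:n-ary} the premise $a$ of an $n$-ary rule is a surjective image of $\{0,\dots,n-1\}$, hence finitely enumerable. Thus any set $R$ of $n$-ary rules is in particular a set of finitary rules, and $\NIDf$ applies verbatim to produce a generating subset of the class of $R$-closed subsets. No reindexing is needed here.

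Next I would establish $\NIDn{n} \to \NIDn{2}$, which is the only place the hypothesis $n \geq 2$ is used, and which I expect to be the (very mild) main obstacle: I must check that every $2$-ary rule is already $n$-ary. Given a $2$-ary premise $a$ with a surjection $g \colon \{0,1\} \to a$, I fix the map $h \colon \{0,\dots,n-1\} \to \{0,1\}$ sending $0 \mapsto 0$ and $i \mapsto 1$ for $1 \leq i \leq n-1$; this $h$ is surjective precisely because $n \geq 2$ guarantees at least one index above $0$, so $g \circ h \colon \{0,\dots,n-1\} \to a$ is a surjection and $(a,b)$ is $n$-ary. Consequently any set $R$ of $2$-ary rules is a set of $n$-ary rules (the arity is a property of the premise, and the $R$-closed subsets do not depend on how this property is witnessed), so $\NIDn{n}$ directly delivers a generating subset for the $R$-closed subsets, which is exactly the conclusion of $\NIDn{2}$.

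Finally I would close the loop. Combining $\NIDf \to \NIDn{n} \to \NIDn{2}$ with the equivalence $\NIDn{2} \iff \NIDf$ furnished by Lemma~\ref{lem:NIDleq2EquivNID2} and Proposition~\ref{prop:NIDfSGM} collapses the entire chain, so $\NIDn{n}$ and $\NIDf$ are equivalent over $\ECST$ for every $n \geq 2$ (and hence the principles $\NIDn{n}$, $n \geq 2$, are all mutually equivalent). The content is entirely borrowed from the two preceding results; the only genuine observation is the trivial nesting of the relevant classes of rules.
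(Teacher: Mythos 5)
Your proposal is correct and follows essentially the same route as the paper, which likewise derives the statement by combining Lemma~\ref{lem:NIDleq2EquivNID2} with Proposition~\ref{prop:NIDfSGM} (the paper merely leaves the trivial inclusions $\NIDf \Rightarrow \NIDn{n} \Rightarrow \NIDn{2}$ implicit, which you spell out correctly, including the reindexing surjection needed for $n \geq 2$).
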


\subsection{Formal points of formal topologies}
\label{sec:PTSGFTop}
The initial motivation of the $\NID$ principle comes from the problem
of constructing generating subsets in
constructive point-free topology, where
some of its results require various extensions of $\CZF$.
Van den Berg~\cite[Section 5]{vandenBergNID}
and Aczel et al.\ \cite[Section 7.2]{SGA} illustrated the power of
$\NID$ by providing a uniform solution to these problems using $\NID$
and $\SGA$, respectively. With some adjustments to the setting
of $\ECST$, we turn some of
their results into equivalents of $\NIDf$.

We adopt the following definition of formal
topology~\cite{Sambin:intuitionistic_formal_space}, which is the notion of
point-free topology in constructive and predicative foundations.
\begin{definition}[{Coquand et al.\  \cite[Definition
2.1]{CoquandIndGenFTop}}]
  A \emph{formal topology} is a triple $\mathcal{S} =
  (S, \leq, \cov)$ where $(S, \leq)$ is a preordered set and 
   $\cov$ is a relation from $S$ to  $\pow(S)$
   such that
   $
   \left\{ a \in S \mid a \cov U \right\}
   $
  is a set for each $U \subseteq S$ and 
  \begin{enumerate}
    \item $a \in U \implies a \cov U$,
    \item $a \cov U \amp  U \cov V \implies a \cov V$,
    \item $a \cov U \amp a \cov V \implies a \cov  U \downarrow V $,
    \item $a \leq b \implies a \cov \left\{ b \right\}$,
  \end{enumerate}
  where
  \[
    \begin{aligned}
      U \cov V &\defequiv \forall a \in U \left( a \cov V\right), \\
      U \downarrow V &\defeql
      \left\{ c \in S \mid  \exists a \in U  \exists
      b \in V \left( c \leq a \wedge c \leq b \right) \right\}.
    \end{aligned}
  \]
  A subset $\alpha \subseteq S$ is a \emph{formal point} of $\mathcal{S}$ if
  \begin{enumerate}[({P}1)]
    \item $\alpha$ is inhabited,
    \item $ a, b \in \alpha \implies \alpha \meets (a \downarrow
      b)$,
    \item\label{P3} $ a \in \alpha \amp a \cov U \implies \alpha
      \meets U$,
  \end{enumerate}
  where $a \downarrow b \defeql \left\{ a \right\} \downarrow \left\{
  b \right\}$.
  The class of formal points of $\mathcal{S}$ is denoted by
  $\Pt(\mathcal{S})$.
\end{definition}

Our main interest is in inductively generated topologies, which  allow
us to reason about formal topologies using selected sets of axioms.
\begin{definition}
  An \emph{axiom-set} on a set $S$ is a pair $(I,C)$, where
  $(I(a))_{a \in S}$ is a family of sets indexed by  $S$, and $C$ is a
  family $(C(a,i))_{a \in S, i \in I(a)}$ of subsets  of $S$
  indexed by $\sum_{a \in S}I(a)$.
  A formal topology $(S,\leq, \cov)$ is \emph{inductively generated}
  by $(I,C)$ if $\cov$ is the smallest among the relations $\cov'$ such that
  \begin{enumerate}
    \item $a \leq b \cov' U \implies a \cov' U$,
    \item $a \cov' C(a,i)$ for each $i \in I(a)$,
  \end{enumerate}
  and which makes $(S,\leq, \cov')$ a formal topology.
\end{definition}

A formal point of an inductively generated formal
topology can be characterised by
an axiom-set, where condition \ref{P3} is
replaced by
\begin{enumerate}
  \item $a \leq b \amp a \in \alpha \implies b \in \alpha$,
  \item $a \in \alpha \implies \alpha \meets C(a,i)$ for each
    $i \in I(a)$.
\end{enumerate}

\begin{remark}
  The construction of an inductively generated formal topology requires
  $\CZF$ extended with the Regular Extension Axiom
  \cite{Aczel-Rathjen-Note}, which is much stronger than $\ECST$. 
  However, in Proposition \ref{prop:NIDfPt} and 
  Proposition \ref{prop:LKFTopMap}, all we need is a preorder 
  equipped with an axiom-set. Hence, in this paper, we identify inductively
  generated formal topologies with preorders equipped with axiom-sets,
  and adopt the notions of formal point and formal topology map
  formulated in terms of axiom-sets.
\end{remark}

\begin{proposition}
  \label{prop:NIDfPt}
  The following are equivalent over $\ECST + \FPA$:
  \begin{enumerate}
    \item\label{prop:NIDfPt1} $\NIDf$;
    \item\label{prop:NIDfPt2} The class of formal points of any inductively generated formal
    topology is set-generated.
  \end{enumerate}
\end{proposition}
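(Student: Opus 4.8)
The plan is to prove the two directions separately, mirroring the structure of Proposition~\ref{prop:NIDfSGM}. For the easy direction (\ref{prop:NIDfPt1} $\rightarrow$ \ref{prop:NIDfPt2}), I would reduce the class of formal points to the class of models of a suitable geometric theory and invoke Proposition~\ref{prop:NIDfSGM}. Given a preorder $(S, \leq)$ equipped with an axiom-set $(I, C)$, the conditions defining a formal point---inhabitedness (P1), the meet condition (P2), the $\leq$-upward closure, and the covering condition $a \in \alpha \Rightarrow \alpha \meets C(a,i)$---can each be rendered as geometric axioms over $S$. The upward-closure and (P2) conditions use finitely enumerable premises and conclusions; the covering axiom $\medwedge \{a\} \vdash \bigvee_{x \in C(a,i)} \{x\}$ is geometric since its disjunction ranges over the set $C(a,i)$; and inhabitedness is $\medwedge \emptyset \vdash \bigvee_{x \in S} \{x\}$. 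Let $T$ be the geometric theory collecting all these axioms over $S$. Then $\alpha \subseteq S$ is a model of $T$ if and only if it is a formal point, so $\Pt(\mathcal{S}) = \Model{T}$ as subclasses of $\pow(S)$, and set-generation follows directly from Proposition~\ref{prop:NIDfSGM}.

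For the converse (\ref{prop:NIDfPt2} $\rightarrow$ \ref{prop:NIDfPt1}), I would take an arbitrary geometric theory $T$ over a set $S$ and manufacture from it a preorder with an axiom-set whose formal points coincide with $\Model{T}$, then appeal again to Proposition~\ref{prop:NIDfSGM} to conclude $\NIDf$. The natural choice is to take $\fin(S)$ (a set here, since $\FPA$ holds) as the carrier, ordered by reverse inclusion $A \leq B \defequiv B \subseteq A$, so that the lattice structure of finitely enumerable subsets encodes the $\downarrow$-operation and the upward-closure axiom for free. One then defines an axiom-set that forces: each singleton-like basic element to cover the relevant joins, and each axiom $\medwedge A \vdash \bigvee_{i \in I}\medwedge B_i$ of $T$ to become a covering entry $C$ sending $A$ to a family indexed by $I$ with appropriate values. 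The construction should be arranged so that a formal point of this generated topology corresponds exactly to (the image under $\cup$ of) a model of $T$, paralleling the functions $\Phi, \Psi$ of Proposition~\ref{prop:NIDfSGM}.

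The main obstacle I expect is in the converse direction: matching the \emph{geometric} form of axioms, where conclusions are finite conjunctions $\medwedge B_i$ of possibly several elements, against the formal-point covering condition $a \in \alpha \Rightarrow \alpha \meets C(a,i)$, whose conclusion selects only a \emph{single} element of $C(a,i)$. A formal point meeting $C(a,i)$ gives one witness, not a whole finite set $B_i$ sitting inside $\alpha$; so working over $\fin(S)$ rather than $S$ is essential, because there a single element of the carrier already \emph{is} a finite subset, and membership of that element in the point can encode ``$B_i \subseteq \alpha$''. I would therefore carry the $\fin(S)$-construction carefully, including the order axioms and join axioms analogous to \eqref{R:order} and \eqref{R:join}, so that the formal point is closed under the finite-join structure and the correspondence $\alpha \mapsto \cup\alpha$ is a bijection onto $\Model{T}$. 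The verification that this map is well-defined and inverse to $m \mapsto \fin(m)$ is routine but is where the argument must be checked against conditions (P1)--(P3); once that bijection is in place, Proposition~\ref{prop:NIDfSGM} finishes the proof.
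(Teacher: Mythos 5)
Your proposal is correct and follows essentially the same route as the paper: the forward direction encodes (P1), (P2), upward closure, and the covering condition as finitary rules (equivalently, geometric axioms) over $S$, and the converse uses the formal topology on $\fin(S)$ with reverse inclusion together with the mutually inverse maps $\alpha \mapsto \cup\alpha$ and $m \mapsto \fin(m)$, exactly as in the paper's proof. The only cosmetic difference is that you route both directions through Proposition~\ref{prop:NIDfSGM} (geometric theories) whereas the paper applies $\NIDf$ directly in the forward direction and targets $R$-closed classes directly in the converse; and, as you anticipate, the order and join behaviour indeed comes for free from the preorder $\supseteq$ together with (P1) and (P2), so no extra axioms are needed.
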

\begin{proof}
  (\ref{prop:NIDfPt1} $\rightarrow$ \ref{prop:NIDfPt2})
  Assume $\NIDf$. Let $\mathcal{S} = (S,\leq,\cov)$ be a formal topology inductively
   generated by an axiom-set $(I,C)$ on $S$. Then, formal points of
   $\mathcal{S}$ are closed subsets of the following set of finitary
   rules on $S$:
   \[
     \begin{aligned}
       &\left\{ (\emptyset, S) \right\} \\
       &\cup
       \left\{ (\{ a,b \},  a \downarrow b) \right\} \\
       &\cup
       \left\{ (\{ a \}, \left\{ b \right\}) \mid a \leq b \right\} \\
       &\cup
       \left\{ (\{ a \}, C(a,i)) \mid a \in S, i \in
     I(a) \right\}.
   \end{aligned}
 \]
   Thus $\Pt(\mathcal{S})$ is set-generated.
  \smallskip

  \noindent
  (\ref{prop:NIDfPt2} $\rightarrow$ \ref{prop:NIDfPt1})
  Assume that the class of formal points of any inductively generated
  formal topology is set-generated. Let $R$ be a set of finitary rules
  on a set $S$. Using $\FPA$, define an axiom-set $(I,C)$ on $\fin(S)$ by 
  \begin{align*}
    I(a) &\defeql \left\{ (b,c) \in R \mid b = a\right\}, \\
    C(a,(b,c)) &\defeql \left\{ \left\{ y \right\} \mid y \in c
  \right\}.
  \end{align*}
  Let $\mathcal{S} = (\fin(S), \supseteq, \cov)$ be a formal
  topology inductively generated by $(I,C)$ using the reverse
  inclusion order on $\fin(S)$, 
  and let $\mathcal{C}$ be the class of $R$-closed subsets of $S$.
  As in the proof of (\ref{prop:NIDfSGM1} $\rightarrow$ \ref{prop:NIDfSGM3}) in
  Proposition~\ref{prop:NIDfSGM}, one can show that the mappings
  \begin{align*}
    \alpha &\mapsto \mathop{\cup} \alpha \colon \Pt(\mathcal{S}) \to
    \mathcal{C}, &
    \beta &\mapsto \fin(\beta) \colon \mathcal{C} \to
    \Pt(\mathcal{S})
  \end{align*}
  are well-defined and that they are inverses of each other.
  By the assumption, $\Pt(\mathcal{S})$ has a generating subset
  $G$. Then, $H = \left\{ \mathop{\cup} \alpha \mid \alpha \in G\right\}$
  is a generating subset of the class of $R$-closed subsets of $S$.

  Note that $\FPA$ is needed only in the direction (\ref{prop:NIDfPt2}
  $\rightarrow$ \ref{prop:NIDfPt1}).
\end{proof}
A formal point is an instance of morphisms between formal
topologies.
\begin{definition}\label{def:FTM}
  Let $\mathcal{S} = (S, \leq, \cov)$ and $\mathcal{S}'  = (S', \leq',
  \cov')$ be formal topologies.  A relation $r \subseteq S \times S'$
  is a \emph{formal topology map} from $\mathcal{S}$ to $\mathcal{S}'$ if 
  \begin{enumerate}[({FTM}1)]
    \item\label{FTM1} $S\cov r^{-}S'$,
    \item\label{FTM2} $r^{-}a \downarrow r^{-}b \cov r^{-}(a \downarrow' b)$,
    \item\label{FTM3} $a \cov' U \implies r^{-}a \cov r^{-}U$.
  \end{enumerate}
  Two formal topology maps $r,s \colon \mathcal{S} \to\mathcal{S}'$
  are \emph{equal} if
  $
  r^{-}a \cov s^{-} a
  $
  and
  $
  s^{-}a \cov r^{-} a
  $
  for all $a \in S'$.

  If $r \colon \mathcal{S} \to \mathcal{S}'$ is a formal topology map
  and $\mathcal{S}'$ is inductively generated by an
  axiom-set $(I,C)$ on $S'$, then the condition \ref{FTM3} can be replaced by
  \begin{enumerate}[({FTM3}a)]
    \item\label{FTM3a} $a \leq' b \implies r^{-} a \cov r^{-} b$,
    \item\label{FTM3b} $r^{-} a \cov r^{-}C(a,i)$ for each $i \in
      I(a)$.
  \end{enumerate}
\end{definition}
\begin{remark}\label{rem:PtMap}
Let $\One$ denote the formal topology $(\left\{ * \right\}, =,
\in)$. Then, a formal point $\alpha$ of a formal topology
$\mathcal{S}$ corresponds to a formal topology map
$r_{\alpha} \colon \One \to \mathcal{S}$ given by 
  $
  * \mathrel{r_{\alpha}} a \defequiv a \in \alpha.
  $
\end{remark}

Set-presentations of formal topologies provide a stronger notion
of inductive generation.
\begin{definition}
  A formal topology $\mathcal{S} = (S,\leq, \cov)$ is \emph{set-presented} if there
  exists an axiom-set $(I,C)$ on $S$ such that
  \[
    a \cov U \iff \exists i \in I(a) \left( C(a,i) \subseteq U \right).
  \]
  In this case, $(I,C)$ is called a \emph{set-presentation} of
  $\mathcal{S}$.
\end{definition}

\begin{proposition}
  \label{prop:LKFTopMap}
  The following are equivalent over $\ECST + \FPA$:
  \begin{enumerate}
    \item\label{prop:LKFTopMap1} $\NIDf$;
    \item\label{prop:LKFTopMap2} The class of formal topology maps from a set-presented
      formal topology to an inductively generated formal
    topology is set-generated.
  \end{enumerate}
\end{proposition}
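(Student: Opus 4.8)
The plan is to deduce both directions from results already in hand: the characterisation of formal points in Proposition~\ref{prop:NIDfPt}, and the equivalence of the finite-rank version of $\SGA$ with $\NIDf$ recorded in Remark~\ref{rem:Rankn}.

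For the direction (\ref{prop:LKFTopMap2} $\rightarrow$ \ref{prop:LKFTopMap1}) I would first note that $\One$ is set-presented: the axiom-set $I(*) \defeql \{0\}$, $C(*,0) \defeql \{*\}$ satisfies $* \cov U \iff C(*,0) \subseteq U \iff * \in U$, which is exactly the cover of $\One$. By Remark~\ref{rem:PtMap}, for an inductively generated $\mathcal{S}'$ the formal topology maps $\One \to \mathcal{S}'$ are precisely the relations $r \subseteq \{*\} \times S'$ whose section $\{a \in S' \mid * \mathrel{r} a\}$ is a formal point of $\mathcal{S}'$; under the identification $\{*\} \times S' \cong S'$ this is an inclusion-preserving bijection between the two classes, so it transports a generating subset of one to a generating subset of the other. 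Thus (\ref{prop:LKFTopMap2}) yields set-generation of $\Pt(\mathcal{S}')$ for every inductively generated $\mathcal{S}'$, and Proposition~\ref{prop:NIDfPt} then gives $\NIDf$.

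For the direction (\ref{prop:LKFTopMap1} $\rightarrow$ \ref{prop:LKFTopMap2}), assume $\NIDf$, let $\mathcal{S} = (S, \leq, \cov)$ be set-presented by $(I,C)$, and let $\mathcal{S}' = (S', \leq', \cov')$ be inductively generated by an axiom-set $(J,D)$. I would express the class of formal topology maps $\mathcal{S} \to \mathcal{S}'$ as the class of models of a generalised geometric theory $Z$ over the set $S \times S'$, identifying a relation $r$ with the subset $\{(x,a) \mid x \mathrel{r} a\}$. Writing $r^{-}U \defeql \{z \in S \mid \exists w \in U (z \mathrel{r} w)\}$ for $U \subseteq S'$, the conditions of Definition~\ref{def:FTM} read \ref{FTM1}~$S \cov r^{-}S'$, \ref{FTM2}~$r^{-}a \downarrow r^{-}b \cov r^{-}(a \downarrow' b)$, and, since $\mathcal{S}'$ is inductively generated, \ref{FTM3a}~$r^{-}a \cov r^{-}b$ whenever $a \leq' b$ and \ref{FTM3b}~$r^{-}a \cov r^{-}D(a,j)$ for each $j \in J(a)$. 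The set-presentation lets me unfold every instance $c \cov V$ of the source cover as $\bigvee_{i \in I(c)} \bigl( C(c,i) \subseteq V \bigr)$, and each inclusion $C(c,i) \subseteq r^{-}U$ as $\bigwedge_{z \in C(c,i)} \bigvee_{w \in U} (z,w)$. Taking $Z$ to consist of one axiom $\medwedge \sigma \vdash (\text{unfolding of } c \cov V)$ for each instance of \ref{FTM1}--\ref{FTM3b} — with $\sigma = \emptyset$ for \ref{FTM1}, $\sigma = \{(u,a),(v,b)\}$ (indexed by witnesses $c \leq u$, $c \leq v$) for \ref{FTM2}, and $\sigma = \{(x,a)\}$ for \ref{FTM3a} and \ref{FTM3b} — produces a theory all of whose premises are finitely enumerable and whose right-hand sides are nestings of $\bigvee\bigwedge$ pairs of rank at most $2$. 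By construction $\Model{Z}$ is exactly the class of formal topology maps, and by Remark~\ref{rem:Rankn} the finite-rank $\SGA$ follows from $\NIDf$, so $\Model{Z}$ is (strongly) set-generated. The use of $\FPA$ is in forming $\fin(S \times S')$ so that $Z$ is a genuine set; as in Proposition~\ref{prop:NIDfPt}, it is needed only in this direction.

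The main obstacle I expect is bookkeeping rather than conceptual. One must choose the premises so that they are finite — this is why \ref{FTM2} is indexed by the witnesses $u, v$ realising $c \in r^{-}a \downarrow r^{-}b$ rather than by the compound set itself — and then check carefully, in both directions, that satisfaction of the unfolded axioms by $r$ coincides with \ref{FTM1}--\ref{FTM3b}, keeping track of the rank-$2$ reading $\sigma \subseteq r \Rightarrow \exists i \in I(c) \, \forall z \in C(c,i) \, \exists w \in U (z \mathrel{r} w)$. Assembling $Z$ as a set from these several families of axioms, via Replacement and $\FPA$, is routine but must be carried out so that the indexing sets and the iterated power-set levels are legitimately formed in $\ECST + \FPA$.
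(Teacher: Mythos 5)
Your proposal is correct and follows essentially the same route as the paper: the direction (\ref{prop:LKFTopMap2} $\rightarrow$ \ref{prop:LKFTopMap1}) via the set-presented topology $\One$, Remark~\ref{rem:PtMap} and Proposition~\ref{prop:NIDfPt}, and the direction (\ref{prop:LKFTopMap1} $\rightarrow$ \ref{prop:LKFTopMap2}) by encoding \ref{FTM1}, \ref{FTM2}, \ref{FTM3a}, \ref{FTM3b} as a rank-$2$ generalised geometric theory over $S \times S'$ (unfolding the source covers through the set-presentation, with finitely enumerable premises given by the witnesses) and invoking Remark~\ref{rem:Rankn}; your axiom families match the paper's \eqref{GGA-FTM1}--\eqref{GGA-FTM4} up to renaming. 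The only quibble is your closing remark on $\FPA$: as in Proposition~\ref{prop:NIDfPt}, it is genuinely needed only in the direction (\ref{prop:LKFTopMap2} $\rightarrow$ \ref{prop:LKFTopMap1}), since in the other direction the set-hood of $\fin(S \times S')$ already follows from $\NIDf$ via Remark~\ref{rem:NID0Fullness}.
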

\begin{proof}
  Since $\One$ is set-presented, \ref{prop:LKFTopMap2} implies 
  \ref{prop:LKFTopMap1} by Proposition~\ref{prop:NIDfPt}
  and Remark~\ref{rem:PtMap}. Note that we need $\FPA$ in this
  direction.

  Conversely, assume $\NIDf$.
  Let $\mathcal{S} = (S, \leq, \cov)$ be a 
  formal topology with a set-presentation $(I,C)$,
  and let $\mathcal{T} = (T, \leq', \cov')$ be
  a formal topology inductively generated by an axiom-set $(J,D)$.
  Then, a formal topology map $r \colon \mathcal{S} \to \mathcal{T}$ is
  a model of the following generalised geometric theory over $S \times T$
  (cf.\ Aczel et al.\ \cite[Proposition 7.8]{SGA}):
  \begin{align}
  \label{GGA-FTM1}
    &\vdash \bigvee_{i \in I(a)}\bigwedge_{a' \in
    C(a,i)}\bigvee_{b \in T}(a',b) 
    &&(a \in S) \\
  \label{GGA-FTM2}
    (b',b) \wedge (c',c) &\vdash \bigvee_{i \in I(a)}\bigwedge_{a' \in
    C(a,i)}\bigvee_{d \in b \downarrow' c} (a',d)
    && (a \in b' \downarrow c')\\
  \label{GGA-FTM3}
    (a,b) &\vdash \bigvee_{i \in I(a)}\bigwedge
    C(a,i) \times \left\{ c \right\}
    && (b \leq' c)\\
  \label{GGA-FTM4}
    (a,b) &\vdash \bigvee_{i \in I(a)}\bigwedge_{a' \in
    C(a,i)}\bigvee_{b' \in D(b,j)} (a',b')
    && (j \in J(b))
  \end{align}
  where \eqref{GGA-FTM1}, \eqref{GGA-FTM2}, \eqref{GGA-FTM3},
  and \eqref{GGA-FTM4} are derived from 
  \ref{FTM1}, \ref{FTM2}, \ref{FTM3a},  and \ref{FTM3b},
  respectively, using the fact that $\mathcal{S}$ is set-presented by $(I,C)$.
  The disjunctions such as $\bigvee_{b \in T}(a',b)$ must be read as
  $\bigvee_{b \in T} \bigwedge \left\{ (a',b) \right\}$.
  Then, the required conclusion follows from Remark \ref{rem:Rankn}.
\end{proof}

\subsection{Equalisers in the category of concrete spaces}
\label{sec:EqualiserCSpa}
We show that $\NIDf$ is equivalent to the existence of equalisers in
the category of concrete spaces, a predicative notion of point-set
topology by Sambin~\cite{Sambin_BP_book}. 
Ishihara and Kawai \cite[Section 4]{CompletenessOfBPandCSpa}
have already shown that the category is complete and cocomplete using $\SGA$.
Hence, the essence of this subsection is that the converse holds.
\begin{definition}
A \emph{concrete space} is  a basic pair $(X,\Vdash,S)$ such that
\begin{enumerate}
  \item  $X = \ext S$,
  \item  $\ext a \cap \ext b = \ext(a \downarrow b)$
\end{enumerate}
for all $a,b \in S$, where 
\begin{gather}
  \label{def:ext}
  \ext a \defeql \left\{ x \in X \mid x \Vdash a \right\},
  \qquad
  \qquad
  \ext U \defeql  \bigcup_{a \in U} \ext a,\\
  \notag
  a \downarrow b \defeql \left\{ c \in S \mid \ext c \subseteq \ext a
  \cap \ext b \right\}.
\end{gather}
We also define
  $
  U \downarrow V \defeql \bigcup_{a \in U, b \in V} a \downarrow b
  $
for $U,V \in \pow(S)$.

Let $\mathcal{X}_{1} = (X_{1},\Vdash_{1},S_{1})$
and 
$\mathcal{X}_{2} = (X_{2},\Vdash_{2},S_{2})$ be basic pairs.
A relation pair $(r,s) \colon \mathcal{X}_{1} \to
\mathcal{X}_{2}$ is said to be \emph{convergent} if 
\begin{enumerate}
  \item  $\ext_{1} S_{1} = r^{-}\ext_{2} S_{2}$,
  \item  $\ext_{1}(s^{-}a \downarrow s^{-}b) = r^{-}\ext_{2}(a
    \downarrow b)$,
\end{enumerate}
where $\ext_{i} \;(i = 1,2)$ is the operator
given by \eqref{def:ext}
associated with $\Vdash_i \,(i = 1,2)$.
\end{definition}

Concrete spaces and convergent relation pairs form a
subcategory $\CSpa$ of $\BP$.
Specifically, $\CSpa$ is a coreflective subcategory of $\BP$.
\begin{proposition}[{Ishihara and Kawai~\cite[Theorem
  7.1]{CompletenessOfBPandCSpa}}]\label{prop:Coreflection}
  For any basic pair $\mathcal{X}$, there exist a concrete space
  $\widetilde{\mathcal{X}}$ and a relation pair $(r,s) \colon
  \widetilde{\mathcal{X}}
  \to\mathcal{X}$  such that for any concrete space
  $\mathcal{Y}$ and a relation pair $(u,v) \colon \mathcal{Y} \to
  \mathcal{X}$ there exists a unique convergent relation pair
  $(\widetilde{u},\hat{v}) \colon \mathcal{Y} \to
  \widetilde{\mathcal{X}}$ which makes the following diagram commute:
  \begin{equation}\label{eq:BPCSpaExtension}
    \begin{tikzcd}
      \mathcal{Y} 
      \arrow[r,"{(\widetilde{u},\hat{v})}"]
      \arrow[dr,"{(u,v)}"']
      &
      \widetilde{\mathcal{X}}
      \arrow[d,"{(r,s)}",pos=0.4]
      \\
      & \mathcal{X}
    \end{tikzcd}
  \end{equation}
\end{proposition}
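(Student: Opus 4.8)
The plan is to realise $\widetilde{\mathcal X}$ as the \textbf{cofree} concrete space over the basic pair $\mathcal X = (X,\Vdash,S)$, so that the required $(r,s)$ is the counit of the coreflection $\CSpa\hookrightarrow\BP$. The guiding idea is to force the two concrete-space axioms to hold on the formal side by closing the neighbourhoods under finite meets. Concretely, I would take the formal part of $\widetilde{\mathcal X}$ to be $\fin(S)$, with a point $x$ forcing $\sigma\in\fin(S)$ exactly when $\forall a\in\sigma\,(x\Vdash a)$, so that $\ext\sigma=\bigcap_{a\in\sigma}\ext a$. Then the empty neighbourhood gives $\ext\emptyset=X$, which yields the covering axiom, and $\ext\sigma\cap\ext\tau=\ext(\sigma\cup\tau)\subseteq\ext(\sigma\downarrow\tau)$ (the reverse inclusion being automatic) yields convergence of the base; hence $\widetilde{\mathcal X}$ is a concrete space. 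For the points I would keep those of $X$, adjoining if necessary a single empty point to guarantee the covering/totality conditions below. The counit $(r,s)\colon\widetilde{\mathcal X}\to\mathcal X$ is then the identity on points together with the membership relation $\sigma\mathrel{s}a\defequiv a\in\sigma$ on the formal side; being a morphism of $\BP$ it need not be convergent, and the relation-pair identity $\Vdash\circ r=s\circ\widetilde\Vdash$ collapses to the remark that $x\Vdash a$ iff $x$ forces the singleton $\{a\}$.

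The heart of the argument is the universal property. Given a concrete space $\mathcal Y=(Y,\Vdash_Y,T)$ and a relation pair $(u,v)\colon\mathcal Y\to\mathcal X$, I would build a convergent lift $(\widetilde u,\hat v)\colon\mathcal Y\to\widetilde{\mathcal X}$ whose formal component $\hat v$ sends a neighbourhood $t$ to those $\sigma\in\fin(S)$ with $\sigma\subseteq\{a\in S : t\mathrel{v}a\}$, and whose point component agrees with $u$ (rerouting any stray point of $Y$ to the empty point). The commutativity $(r,s)\circ(\widetilde u,\hat v)\sim(u,v)$ unwinds, via \eqref{eq:RelPairEqual}, to the single identity $\Vdash\circ\widetilde u=\Vdash\circ u$, which the choice of point component secures. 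Uniqueness is then the assertion that the two constraints, being a convergent relation pair and commuting with the counit, pin down $\widetilde\Vdash\circ\widetilde u$ completely, so that any two lifts fall in the same $\sim$-class.

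The main obstacle, and the only place where concreteness of the \emph{domain} $\mathcal Y$ is used, is verifying convergence of the lift and the attendant uniqueness. The delicate direction is that $y$ should force a finite set $\sigma=\{a_1,\dots,a_n\}$ in $\widetilde{\mathcal X}$ precisely when a \emph{single} neighbourhood of $y$ is carried into every $\ext a_i$; passing from the separate witnesses supplied by $\Vdash\circ u=v\circ\Vdash_Y$ to one common witness is exactly what the convergence axiom of $\mathcal Y$, namely $\ext_Y t\cap\ext_Y t'=\ext_Y(t\downarrow t')$, provides, by intersecting the finitely many neighbourhoods into one. Granting this, the second convergence condition $\ext_Y(\hat v^{-}\sigma\downarrow\hat v^{-}\tau)=\widetilde u^{-}\ext(\sigma\downarrow\tau)$ becomes a routine consequence of the same meet-closure, and the first convergence condition reduces to totality of $\widetilde u$, which is where the empty point (equivalently $\emptyset\in\fin(S)$) is needed. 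I would finally note that the construction is an instance of the general Freyd-completion picture for $\Rel$ already invoked in Proposition~\ref{prop:RelWEqBPEq}, so the bookkeeping can be organised along those lines.
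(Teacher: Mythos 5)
The construction you propose does not satisfy the universal property, and the failure occurs exactly at the step you flag as delicate; concreteness of $\mathcal{Y}$ does not repair it. Take $X=\{x_1,x_2\}$, $S=\{a,b\}$ with $x_1\Vdash a$ and $x_2\Vdash b$ only; let $\mathcal{Y}=(\{*\},\{(*,t)\},\{t\})$, $u=\{(*,x_1),(*,x_2)\}$, $v=\{(t,a),(t,b)\}$, which is a relation pair $\mathcal{Y}\to\mathcal{X}$ from a concrete space. In your $\widetilde{\mathcal{X}}$ the points are (up to the adjoined empty point, which forces only $\emptyset$ and changes nothing here) those of $X$, and, writing $\ext'$ for its extension operator, $\ext'\{a,b\}=\ext a\cap\ext b=\emptyset$, so $\{a\}\downarrow\{b\}=\{\{a,b\}\}$ in $\widetilde{\mathcal{X}}$ and $\ext'(\{a\}\downarrow\{b\})=\emptyset$. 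Now any lift $(\widetilde u,\hat v)$ with $(r,s)\circ(\widetilde u,\hat v)\sim(u,v)$ must satisfy ${\Vdash}\circ\widetilde u={\Vdash}\circ u$ (since $r=\id_X$), which forces $x_1,x_2\in\widetilde u*$; the relation-pair identity ${\Vdash'}\circ\widetilde u=\hat v\circ{\Vdash_Y}$ then forces $t\mathrel{\hat v}\{a\}$ and $t\mathrel{\hat v}\{b\}$; and the second convergence condition at the pair $(\{a\},\{b\})$ reads $\ext_Y(\hat v^{-}\{a\}\downarrow\hat v^{-}\{b\})=\widetilde u^{-}\ext'(\{a\}\downarrow\{b\})$, whose left-hand side contains $*$ (via $t\in t\downarrow t$) while the right-hand side is empty. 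Hence no convergent lift of $(u,v)$ exists. The underlying point is that a relation pair from a concrete space may send $y$ to a set $uy$ that meets $\ext a$ and $\ext b$ without meeting $\ext(a\downarrow b)$; concreteness of $\mathcal{Y}$ does give a single neighbourhood $t'$ of $y$ with $\ext_Y t'\subseteq\ext_Y t_1\cap\ext_Y t_2$, but nothing forces $v$ to relate $t'$ to both $a$ and $b$, and in any case the point component of the lift is already pinned down to $u$ by commutativity with your counit, after which convergence is simply violated.

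The repair is that $\widetilde{\mathcal{X}}$ must have \emph{more} points than $X$: roughly, a point for each $\sigma\in\fin(S)$ together with a function $f$ on $\sigma$ satisfying $f(a)\Vdash a$ for all $a\in\sigma$, with $(\sigma,f)\Vdash^{*}\tau$ iff $\tau\subseteq\Diamond(\mathrm{im}(f))$, counit given by $(\sigma,f)\mathrel{r}x\defequiv x\in\mathrm{im}(f)$ and $s={\ni}$. Such a finite ``multi-point'' $(\{a,b\},f)$ with $f(a)=x_1$, $f(b)=x_2$ inhabits $\ext^{*}(\{a\}\downarrow\{b\})$ and dissolves the obstruction above; forming this set of pairs is also precisely where $\FPA$ enters, which your construction never uses. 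Your choice of $\fin(S)$ for the formal side, the verification that the result is a concrete space, and the general shape of $\hat v$ are all in the right direction, but with the point set as you have it the claimed universal property is false, so the proof does not go through; the paper itself defers the construction to Ishihara and Kawai, Theorem~7.1 of \cite{CompletenessOfBPandCSpa}, which is where the correct point set is built.
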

\begin{proof}
  See Ishihara and Kawai~\cite[Theorem
  7.1]{CompletenessOfBPandCSpa}
  for the details. Their proof can be carried out in $\ECST + \FPA$.
\end{proof}

The construction of an equaliser of $\CSpa$ uses the notion of
convergent subset.
\begin{definition}
  Let $\mathcal{X} = (X, \Vdash, S)$ be a basic pair. A subset $D
  \subseteq X$ is
  said to be \emph{convergent} if 
  \begin{enumerate}
      \item $D \meets \ext S$,
      \item  $D \meets \ext  a  \amp D \meets \ext b
      \implies  D \meets \ext(a \downarrow b)$.
  \end{enumerate}
  The class of   convergent subsets of
  a basic pair $\mathcal{X}$ is denoted by $\Conv(\mathcal{X})$.
\end{definition}
An equaliser in $\CSpa$ is constructed from a generating subset
of a certain class.
In the lemma below, $\Diamond_2 U$ denotes the set
$\left\{ a \in S_{2} \mid  \exists x \in U 
\left(x \Vdash_{2} a\right)  \right\}$
for each subset $U \subseteq X_2$.
\begin{lemma}[{Ishihara and Kawai \cite[Proposition 6.3]{CompletenessOfBPandCSpa}}]
  \label{lem:CSpaEql}
  Let $\mathcal{X}_{1} = (X_{1},\Vdash_{1},S_{1})$ and $\mathcal{X}_{2}
  = (X_{2},\Vdash_{2},S_{2})$ be concrete spaces and $(r_{1}, s_{1}),
  (r_{2}, s_{2}) \colon \mathcal{X}_{1} \to \mathcal{X}_{2}$
  be a parallel pair of morphisms in $\CSpa$.
  If the class defined by
  \begin{equation}
    \label{eq:CSpaEql}
    \mathcal{E} \defeql \left\{ D \in \Conv(\mathcal{X}_1) \mid \Diamond_{2}
    r_{1}D \;=\; \Diamond_{2} r_{2}D \right\}
  \end{equation}
  is set-generated, then the parallel pair has an equaliser.
\end{lemma}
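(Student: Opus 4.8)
The plan is to build the equaliser directly from a generating subset $G$ of $\mathcal{E}$, reusing the shape of the weak-equaliser construction in $\Rel$ (Proposition~\ref{prop:NIDbiRelWEq}) and the equaliser construction in $\BP$ (Proposition~\ref{prop:RelWEqBPEq}), but carving out the \emph{convergent} subsets so as to stay inside $\CSpa$. Writing $e \subseteq G \times X_{1}$ for the membership relation $D \mathrel{e} x \defequiv x \in D$, I would take the candidate equaliser to be the basic pair $\mathcal{E}' \defeql (G, {\Vdash_{1}} \circ e, S_{1})$ together with the relation pair $(e, \id_{S_{1}}) \colon \mathcal{E}' \to \mathcal{X}_{1}$. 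Note that $eD = D$ for $D \in G$, so that $\ext_{E} a = \left\{ D \in G \mid D \meets \ext_{1} a \right\}$ and $\ext_{E} S_{1} = G$ (each $D \in \mathcal{E}$ satisfies $D \meets \ext_{1} S_{1}$).

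Then I would verify three things about $(e, \id_{S_{1}})$. First, $\mathcal{E}'$ is a concrete space: the nontrivial inclusion $\ext_{E} a \cap \ext_{E} b \subseteq \ext_{E}(a \downarrow_{E} b)$ holds because, for $D \in G$, convergence of $D$ turns $D \meets \ext_{1} a$ and $D \meets \ext_{1} b$ into $D \meets \ext_{1}(a \downarrow_{1} b)$, while concreteness of $\mathcal{X}_{1}$ gives $a \downarrow_{1} b \subseteq a \downarrow_{E} b$ (if $\ext_{1} c \subseteq \ext_{1} a \cap \ext_{1} b$ then $\ext_{E} c \subseteq \ext_{E} a \cap \ext_{E} b$). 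Second, $(e, \id_{S_{1}})$ is convergent: condition~1 is $\ext_{E} S_{1} = e^{-} \ext_{1} S_{1} = G$, and condition~2, $\ext_{E}(a \downarrow_{E} b) = e^{-} \ext_{1}(a \downarrow_{1} b)$, reduces to the equivalence $D \meets \ext_{1} a \amp D \meets \ext_{1} b \iff D \meets \ext_{1}(a \downarrow_{1} b)$, again from convergence of $D$ and concreteness of $\mathcal{X}_{1}$. Third, it equalises: $(r_{i}, s_{i}) \circ (e, \id_{S_{1}}) = (r_{i} \circ e, s_{i})$, and since $eD = D$ we have $\Diamond_{2} r_{i}(eD) = \Diamond_{2} r_{i} D$, so the defining condition $\Diamond_{2} r_{1} D = \Diamond_{2} r_{2} D$ of $\mathcal{E}$ yields $(r_{1}, s_{1}) \circ (e, \id_{S_{1}}) \sim (r_{2}, s_{2}) \circ (e, \id_{S_{1}})$.

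For the universal property, given a concrete space $\mathcal{Z} = (Z, \Vdash_{Z}, T)$ and a convergent relation pair $(h,k) \colon \mathcal{Z} \to \mathcal{X}_{1}$ with $(r_{1},s_{1})\circ(h,k) \sim (r_{2},s_{2})\circ(h,k)$, the key claim is that every fibre $hz = \left\{ x \mid z \mathrel{h} x \right\}$ lies in $\mathcal{E}$. Here I would use the identity $h^{-} \ext_{1} a = \ext_{Z}(k^{-}a)$, which is just the relation-pair equation ${\Vdash_{1}} \circ h = k \circ {\Vdash_{Z}}$ read fibrewise. Combined with concreteness of $\mathcal{Z}$ (so $\ext_{Z}(k^{-}a) \cap \ext_{Z}(k^{-}b) = \ext_{Z}(k^{-}a \downarrow_{Z} k^{-}b)$) and convergence condition~2 for $(h,k)$, this gives $h^{-} \ext_{1} a \cap h^{-} \ext_{1} b = h^{-} \ext_{1}(a \downarrow_{1} b)$, i.e.\ each $hz$ is convergent; and $\Diamond_{2} r_{1}(hz) = \Diamond_{2} r_{2}(hz)$ follows from the hypothesis, so $hz \in \mathcal{E}$. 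I would then define the factorisation by $z \mathrel{\overline{h}} D \defequiv D \subseteq hz$ and $\overline{k} \defeql k$, and check, applying the generation property of $G$ at each point of $hz$, that $(\overline{h}, k)$ is a convergent relation pair $\mathcal{Z} \to \mathcal{E}'$ with $(e, \id_{S_{1}}) \circ (\overline{h}, k) \sim (h, k)$. Uniqueness up to $\sim$ is then immediate, since both ${\Vdash_{E}} \circ \overline{h}$ and any competing factoriser are forced to equal ${\Vdash_{1}} \circ h$.

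I expect the main obstacle to be the claim that each fibre $hz$ belongs to $\mathcal{E}$: everything else is bookkeeping with $\ext$ and $\downarrow$, but showing $hz$ is convergent is exactly where the convergence of $(h,k)$ and the concreteness of $\mathcal{Z}$ must be combined through the relation-pair identity $h^{-}\ext_{1} a = \ext_{Z}(k^{-}a)$. Once the fibres are known to be generated by $G$, the factorisation, triangle, and uniqueness verifications collapse to the generation property and the equations already established for $(e, \id_{S_{1}})$.
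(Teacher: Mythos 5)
Your proof is correct and takes essentially the same route as the paper, whose own ``proof'' simply defers to Ishihara and Kawai's Proposition 6.3: there, too, the equaliser is the concrete space $(G, {\Vdash_1} \circ e, S_1)$ built on a generating subset $G$ of $\mathcal{E}$ with the membership relation $e$, and the factorisation is $z \mathrel{\overline{h}} D \iff D \subseteq hz$. The verifications you flag --- concreteness of the new space, convergence of $(e,\Delta_{S_1})$ and $(\overline{h},k)$, and membership of each fibre $hz$ in $\mathcal{E}$ --- all go through exactly as you describe, with the generation property of $G$ entering precisely where you use it.
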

\begin{proof}
  See the proof of Ishihara and Kawai~\cite[Proposition
  6.3]{CompletenessOfBPandCSpa}. The proof can be carried out in $\ECST$.
\end{proof}

\begin{proposition}
  \label{prop:EqlImpSGMod}
  The following are equivalent over $\ECST + \FPA$:
  \begin{enumerate}
    \item\label{prop:EqlImpSGMod1} $\NIDf$;
    \item\label{prop:EqlImpSGMod2} $\CSpa$ has equalisers.
  \end{enumerate}
\end{proposition}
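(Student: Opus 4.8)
The plan is to prove the two directions separately, leveraging the machinery already assembled. For the direction (\ref{prop:EqlImpSGMod2} $\rightarrow$ \ref{prop:EqlImpSGMod1}), I would aim to reduce to Proposition~\ref{prop:NIDfPt}: it suffices to show that the existence of equalisers in $\CSpa$ implies that the class of formal points of any inductively generated formal topology is set-generated. The key observation is that a formal point corresponds to a formal topology map from $\One$ (Remark~\ref{rem:PtMap}), and every concrete space carries an underlying formal topology structure via the $\downarrow$ operation built into its definition. So I would take an inductively generated formal topology $\mathcal{S}$, realise its associated basic pair as (or embed it into) a concrete space, set up a parallel pair of convergent relation pairs whose equaliser encodes the formal-point condition, and read off a generating subset of $\Pt(\mathcal{S})$ from the equaliser object. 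Since $\NIDf$ is equivalent to $\NIDn{\leq 2}$ and hence implies $\FPA$ (via $\NIDn{0}$, cf.\ Remark~\ref{rem:NID0Fullness}), working over $\ECST + \FPA$ is harmless.

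For the direction (\ref{prop:EqlImpSGMod1} $\rightarrow$ \ref{prop:EqlImpSGMod2}), the strategy is to invoke Lemma~\ref{lem:CSpaEql}: a parallel pair in $\CSpa$ has an equaliser provided the class $\mathcal{E}$ defined by \eqref{eq:CSpaEql} is set-generated. So the whole problem reduces to showing, assuming $\NIDf$, that
\[
  \mathcal{E} = \left\{ D \in \Conv(\mathcal{X}_1) \mid \Diamond_{2}
    r_{1}D = \Diamond_{2} r_{2}D \right\}
\]
is set-generated. First I would unwind the two defining conditions. Membership in $\Conv(\mathcal{X}_1)$ is governed by the two convergence clauses: $D \meets \ext_1 S_1$, which is a nullary rule demanding $D$ meet $\ext_1 S_1$; and the binary convergence condition $D \meets \ext_1 a \amp D \meets \ext_1 b \implies D \meets \ext_1(a \downarrow b)$, which for each pair $(a,b)$ translates into a rule of the form $(\{x,y\}, \ext_1(a\downarrow b))$ ranging over $x \in \ext_1 a$, $y \in \ext_1 b$ — this is where finitary (indeed binary) premises appear. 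The equaliser condition $\Diamond_2 r_1 D = \Diamond_2 r_2 D$ expands, for each $c \in S_2$, into the biconditional that $D$ meets $r_1^{-}(\ext_2 c$-preimage$)$ iff it meets the $r_2$-counterpart, i.e.\ a biclosedness-type constraint that I would encode as a pair of elementary rules on $X_1$ (as in the proof of Proposition~\ref{prop:NIDbiRelWEq}).

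Assembling these, I would define a single set $R$ of finitary rules on $X_1$ whose closed subsets are exactly the members of $\mathcal{E}$, and then apply $\NIDf$ to obtain a generating subset. The main obstacle I anticipate is the bookkeeping needed to verify that an $R$-closed subset is genuinely convergent and satisfies the equality $\Diamond_2 r_1 D = \Diamond_2 r_2 D$, rather than merely one inclusion: the convergence clause is stated with $\meets$ on both sides, so I must check that closure under the binary rules faithfully captures "$D$ meets $\ext_1 a$ and $D$ meets $\ext_1 b$ imply $D$ meets $\ext_1(a \downarrow b)$" without spuriously weakening or strengthening it, and that the biclosedness rules correctly yield the two-sided equality of the $\Diamond_2$-images. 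A secondary subtlety is ensuring $R$ is genuinely a \emph{set} of rules — this is where $\FPA$ (available since $\NIDf$ implies it) guarantees that the relevant index sets, such as $\ext_1 a$ and the pairs drawn from $S_2$, can be collected. Once $\mathcal{E}$ is shown set-generated, Lemma~\ref{lem:CSpaEql} delivers the equaliser, completing the proof.
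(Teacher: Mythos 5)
Your direction (\ref{prop:EqlImpSGMod1} $\rightarrow$ \ref{prop:EqlImpSGMod2}) is essentially the paper's proof: reduce to Lemma~\ref{lem:CSpaEql} and exhibit $\mathcal{E}$ as the class of $R$-closed subsets of $X_1$ for a set $R$ of finitary rules --- one nullary rule $(\emptyset, \ext_1 S_1)$, binary rules $(\{x,y\}, \ext_1(a \downarrow b))$ for $x \Vdash_1 a$ and $y \Vdash_1 b$, and elementary rules $(\{x\}, r_2^{-}\ext_2 c)$ for $c \in \Diamond_2 r_1\{x\}$ together with their mirror images. Your worry about capturing the two-sided equality $\Diamond_2 r_1 D = \Diamond_2 r_2 D$ is resolved exactly by including both families of elementary rules, and $R$ is a set by Replacement and Union alone; $\FPA$ is not actually needed in this direction (the paper uses it only in the converse).

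The converse direction is where the gap lies. You propose to reduce to Proposition~\ref{prop:NIDfPt} by encoding formal points of an inductively generated formal topology as an equaliser in $\CSpa$, but you never say how to produce the required parallel pair of morphisms \emph{in} $\CSpa$. This is the crux: the natural relations encoding closure conditions (whether the formal-point axioms or, as in the paper, a set $R$ of finitary rules) have as codomain a discrete basic pair such as $R_{\Delta} = (R, \id_R, R)$, which is \emph{not} a concrete space, so the obvious parallel pair lives in $\BP$ and cannot be equalised in $\CSpa$ as it stands. The paper resolves this with the coreflection of $\BP$ into $\CSpa$ (Proposition~\ref{prop:Coreflection}): it defines $r_1, r_2 \subseteq \fin(S) \times R$ by $A \mathrel{r_1} (a,b) \defequiv A \supseteq a$ and $A \mathrel{r_2} (a,b) \defequiv \exists y \in b \left( A \supseteq \{y\} \cup a \right)$ on the concrete space $\fin(S)_{\supseteq}$ (this is where $\FPA$ enters), passes to the induced convergent pairs $(\widetilde{r_i}, \widehat{r_i}) \colon \fin(S)_{\supseteq} \to \widetilde{R_{\Delta}}$, takes their equaliser $(p,q) \colon \mathcal{X} \to \fin(S)_{\supseteq}$, and extracts the generating set $G = \{\alpha_x \mid x \in X\}$ with $\alpha_x = \{ z \mid \exists A \left( x \mathrel{p} A \wedge A \supseteq \{z\} \right)\}$; convergence of $(p,q)$ is then what makes each $\alpha_x$ $R$-closed, and the universal property applied to the one-point relation pair $(p_{\beta}, q_{\beta})$ shows that $G$ generates. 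Your detour through formal points adds the further burden of matching formal points of $\mathcal{S}$ with convergent subsets of a suitable concrete space without removing any of these obligations. Until you specify the codomain concrete space (or invoke the coreflection) and the extraction of the generating subset from the equaliser object, the direction (\ref{prop:EqlImpSGMod2} $\rightarrow$ \ref{prop:EqlImpSGMod1}) is not established.
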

\begin{proof}
  (\ref{prop:EqlImpSGMod1} $\rightarrow$ \ref{prop:EqlImpSGMod2})
  This has been proved by Ishihara and Kawai~\cite[Proposition
  6.3]{CompletenessOfBPandCSpa} using $\SGA$.
  We reformulate the proof using $\NIDf$. Assume $\NIDf$, and let
  $(r_{1}, s_{1}), (r_{2}, s_{2}) \colon \mathcal{X}_{1} \to
  \mathcal{X}_{2}$
  be a parallel pair of morphisms in $\CSpa$.
  It suffices to show that the class $\mathcal{E}$ given in
  \eqref{eq:CSpaEql} is set-generated. Define a set
   of finitary rules on $X_1$ by 
   \[
     \begin{aligned}
       R \defeql&
       \left\{(\emptyset, \ext_{1}S_{1})  \right\} \\
       &\cup
       \left\{ (\{ x,y \}, \ext_{1} (a \downarrow b)) \mid
       x \Vdash_{1} a, y \Vdash_{1} b \right\} \\
     &\cup
     \left\{ (\{ x \}, r_{2}^{-}\ext_{2} c ) \mid
     c \in \Diamond_{2} r_{1} \{ x \} \right\} \\
     &\cup
     \left\{ (\{ x \}, r_{1}^{-}\ext_{2} c ) \mid
     c \in \Diamond_{2} r_{2} \{ x \} \right\}.
   \end{aligned}
 \]
  Then, $\mathcal{E}$ is the class of $R$-closed subsets of
  $X_{1}$, and thus it is set-generated.
  \smallskip

  \noindent
  (\ref{prop:EqlImpSGMod2} $\rightarrow$ \ref{prop:EqlImpSGMod1})
  Assume that $\CSpa$ has equalisers.  
  In the following, we write $S_{\Delta}$ for the basic pair $(S, \id_{S}, S)$ on a set $S$ and 
$\fin(S)_{\supseteq}$ for the concrete space $(\fin(S), \supseteq, \fin(S))$. 

  Let $R$ be a set of finitary rules on a set $S$.
  Define relations $r_{1}, r_{2}
  \subseteq \fin(S) \times R$ by
  \[
    \begin{aligned}
      A \mathrel{r_{1}} (a,b)
      &\defequiv A \supseteq a, \\
      A \mathrel{r_{2}} (a,b)
      &\defequiv
      \exists y \in b \left(  A \supseteq \left\{ y \right\} \cup a \right).
    \end{aligned}
  \]
  Obviously, $(r_{1},r_{1})$ and $(r_{2},r_{2})$ are relation pairs from
  $\fin(S)_{\supseteq}$ to $R_{\Delta}$. By
  Proposition~\ref{prop:Coreflection}, $(r_{1},r_{1})$ and $(r_{2},r_{2})$
  determine unique convergent relation pairs
  $(\widetilde{r_{1}},\widehat{r_{1}})$ and $(\widetilde{r_{2}},\widehat{r_{2}})$
  from $\fin(S)_{\supseteq}$ to $\widetilde{R_{\Delta}}$,
  respectively, which make the diagram \eqref{eq:BPCSpaExtension}
  commute.
  Let $(p,q) \colon \mathcal{X} \to \fin(S)_{\supseteq}$  be an equaliser of
  $(\widetilde{r_{1}},\widehat{r_{1}})$ and
  $(\widetilde{r_{2}},\widehat{r_{2}})$ in $\CSpa$, and write
  $\mathcal{X} = (X, \Vdash, K)$. Define a set $G$ of subsets of $S$ by 
  \begin{align*}
    G &\defeql \left\{ \alpha_{x} \mid x \in X\right\},
    \shortintertext{where}
    \alpha_{x}
    &\defeql 
    \left\{ z \in S \mid
      \exists A \in \fin(S) \left( x \mathrel{p} A \wedge A
      \supseteq \left\{ z \right\}\right) \right\}.
    \end{align*}
  Let $x \in X$ and $(a,b) \in R$ such that $a \subseteq \alpha_{x}$. Since
  $(p,q)$ is convergent, there exists a $B \in \fin(S)$ such that
  $x \mathrel{p} B$ and $B \supseteq a$. Thus, $x \mathrel{(r_{1}
\circ p)} (a,b)$.
  Since $(r_{1}, r_{1}) \circ (p,q) \sim (r_{2}, r_{2}) \circ
  (p,q)$, we have $x \mathrel{(r_{2} \circ p)} (a,b)$, so there
  exist $C \in \fin(S)$ and $y \in b$ such that $x
  \mathrel{p} C$ and $C \supseteq \left\{ y \right\} \cup a$. Then
  $y \in \alpha_{x}$, and so $\alpha_{x}$ is $R$-closed.
  
  We show that $G$ generates the class of $R$-closed subsets of
  $S$.
  Let $\beta$ be an arbitrary $R$-closed subset, and $z \in
  \beta$. 
  Define a relation pair
  $(p_{\beta}, q_{\beta}) \colon \left\{ * \right\}_{\Delta} \to \fin(S)_{\supseteq}$
  by
  \[
    \begin{aligned}
      * \mathrel{p_{\beta}} A &\defequiv A \subseteq \beta, \\
      * \mathrel{q_{\beta}} A &\defequiv A \subseteq \beta.
    \end{aligned}
  \]
  It is easy to see that $(p_{\beta}, q_{\beta})$ is convergent.
  Then, for any $(a,b) \in R$
  \begin{align*}
    * \mathrel{(r_{1} \circ p_{\beta})} (a,b) 
    &\iff
     \exists A \in \fin(S) \left( a \subseteq A \subseteq
    \beta \right)\\
    &\iff
    a \subseteq \beta\\
    &\iff
    \exists y \in b \left(  a \cup \left\{ y \right\} \subseteq
    \beta \right) \\
    &\iff
    * \mathrel{(r_{2} \circ p_{\beta})} (a,b).
  \end{align*}
  Thus 
  $({r_{1}},{r_{1}}) \circ (p_{\beta},q_{\beta})
  \sim 
  ({r_{2}},{r_{2}}) \circ (p_{\beta},q_{\beta})$,
  and so
  $(\widetilde{r_{1}},\widehat{r_{1}}) \circ (p_{\beta},q_{\beta})
  \sim 
  (\widetilde{r_{2}},\widehat{r_{2}}) \circ (p_{\beta},q_{\beta})$
  by Proposition~\ref{prop:Coreflection}.
  Hence, $(p_{\beta},q_{\beta})$ factors uniquely through $(p,q)$ via
  a convergent relation pair $(u,v) \colon \left\{ * \right\}_{\Delta} \to \mathcal{X}$ as  follows:
  \[
    \begin{tikzcd}[row sep=scriptsize]
      \left\{ * \right\}
      \arrow[rr, "q_{\beta}"]
      \arrow[rd, "v"']
      &[6]&
      \fin(S)
      \\[-8pt]
      &
      K
      \arrow[ru, "q"']
      &
      \\
      \left\{ * \right\}
      \arrow[uu,"\id_{\left\{ * \right\}}"]
      \arrow[rr,"p_{\beta}", near start]
      \arrow[rd,"u"']
      && 
      \fin(S)
      \arrow[uu,"\supseteq"']
      \\[-8pt]
      &
      X
      \arrow[ru, "p"']
      \arrow[uu,"\Vdash"',crossing over,pos=0.65]
      &
    \end{tikzcd}
  \]
  Since $* \mathrel{p_{\beta}} \left\{ z \right\}$, there exist $x \in X$ and $B \in
  \fin(S)$ such that $* \mathrel{u} x$, $x \mathrel{p} B$, and $B
  \supseteq \left\{ z \right\}$. Thus $z \in \alpha_{x}$. Moreover,  for any
  $y \in \alpha_{x}$, there exists an $A \in \fin(S)$ such that
  $\left\{ y \right\} \subseteq A \subseteq \beta$.
  Hence $\alpha_{x} \subseteq \beta$. 
  Therefore, $G$ generates the class of $R$-closed subsets.

  Note that $\FPA$ is needed only in the direction
  (\ref{prop:EqlImpSGMod2} $\rightarrow$ \ref{prop:EqlImpSGMod1}).
\end{proof}
\begin{remark}
Ishihara and Kawai \cite[Proposition 6.4]{CompletenessOfBPandCSpa}
showed that $\CSpa$ has small products using $\SGA$. Thus, if
$\CSpa$ has equalisers, then $\CSpa$ is complete under $\FPA$. 
Moreover, coequalisers in $\CSpa$ can be
constructed exactly as in $\BP$~\cite[Lemma
5.2]{CompletenessOfBPandCSpa}. Thus, $\CSpa$ is cocomplete
under $\NIDn{1}$ and hence under $\NIDf$ as well.
Therefore, the following are equivalent over $\ECST + \FPA$:
\begin{enumerate}
  \item $\CSpa$ has equalisers;
  \item $\CSpa$ is complete and cocomplete.
\end{enumerate}
\end{remark}

We summarise the equivalents of $\NIDf$.
\begin{theorem}
  \label{thm:NIDf}
The following are equivalent over $\ECST$:
\begin{enumerate}
\item $\NIDf$;
\item $\SGA$;
\item $\NIDn{n} \;(n \geq 2)$;
\item The class of models of any geometric theory is set-generated.
\end{enumerate}
Moreover, each of the following statement together with $\FPA$  is equivalent to $\NIDf$ over $\ECST$:
\begin{enumerate}
  \setcounter{enumi}{4}
  \item The class of formal points of any inductively generated formal
  topology is set-generated;
    \item The class of formal topology maps from a set-presented
      formal topology to an inductively generated formal
    topology is set-generated;
  \item $\CSpa$ has equalisers;
  \item $\CSpa$ is complete and cocomplete.
\end{enumerate}
\end{theorem}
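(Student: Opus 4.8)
The plan is to recognise that this is an omnibus theorem whose entire content has already been distributed across the preceding propositions, lemmas, and remarks; the proof is therefore a matter of assembling the individual equivalences and checking that the $\FPA$ bookkeeping comes out right. First I would dispatch the block of four statements claimed equivalent over $\ECST$ alone. The equivalence of $\NIDf$ and $\SGA$ is exactly the cited result of van den Berg (\cite{vandenBergNID}, Theorem~7.3). The equivalence of $\NIDf$ with set-generation of the class of models of any geometric theory, and with $\NIDn{\leq 2}$, is Proposition~\ref{prop:NIDfSGM}. Finally, $\NIDn{\leq 2}$ coincides with $\NIDn{2}$ by Lemma~\ref{lem:NIDleq2EquivNID2}, and the full equivalence with $\NIDn{n}$ for every $n \geq 2$ is the proposition obtained by combining that lemma with Proposition~\ref{prop:NIDfSGM}. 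Chaining these references establishes the mutual equivalence of items~(1)--(4).

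For the second block I would make explicit the one point that is not purely formal, namely the handling of $\FPA$. Each of Propositions~\ref{prop:NIDfPt}, \ref{prop:LKFTopMap}, and~\ref{prop:EqlImpSGMod} is phrased as an equivalence over $\ECST + \FPA$, and the remark accompanying Proposition~\ref{prop:EqlImpSGMod} records that $\CSpa$ is complete and cocomplete iff it has equalisers, again over $\ECST + \FPA$. To turn each such equivalence into the asserted form — that the statement together with $\FPA$ is equivalent to $\NIDf$ over $\ECST$ — I would argue both directions. From $\NIDf$: by Remark~\ref{rem:NID0Fullness} we have $\NIDf \metaimplies \NIDn{1} \metaimplies \NIDn{0} \metaimplies \FPA$, so under $\NIDf$ we are already working in $\ECST + \FPA$; the forward direction of the relevant proposition then yields the topological or categorical statement, and $\FPA$ holds as well. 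Conversely, any model of $\ECST$ satisfying both the statement and $\FPA$ is a model of $\ECST + \FPA$, so the reverse direction of the same proposition delivers $\NIDf$.

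The main, and essentially only, obstacle is thus accounting rather than new mathematics: I must ensure that the $\FPA$ hypothesis is attached to the topological and categorical statements (items~5--8) but not to the combinatorial ones (items~1--4), and that the implication $\NIDf \metaimplies \FPA$ is invoked so that the forward directions which genuinely consume $\FPA$ are available for free. With this in place the eight statements fall into a single $\ECST$-equivalence class for (1)--(4) and a single $\ECST + \FPA$-equivalence class for (5)--(8), each class pinned to $\NIDf$, and the theorem follows.
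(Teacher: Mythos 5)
Your proposal is correct and matches the paper's intent exactly: Theorem~\ref{thm:NIDf} is stated as a summary with no separate proof, and its content is precisely the assembly of van den Berg's result, Proposition~\ref{prop:NIDfSGM}, Lemma~\ref{lem:NIDleq2EquivNID2}, Propositions~\ref{prop:NIDfPt}, \ref{prop:LKFTopMap}, \ref{prop:EqlImpSGMod}, and the remark following the last of these. Your explicit treatment of the $\FPA$ bookkeeping via $\NIDf \metaimplies \NIDn{1} \metaimplies \NIDn{0} \metaimplies \FPA$ (Remark~\ref{rem:NID0Fullness}) is exactly the point the paper leaves implicit, and it is handled correctly.
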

\subsection*{Acknowledgements}
The authors were supported by the Japan Society for the Promotion of
Science (JSPS), Core-to-Core Program (A. Advanced Research
Networks).
The second author was supported by JSPS KAKENHI Grant Number JP16K05251.
The third author was supported by Istituto Nazionale di Alta
Matematica as a fellow of INdAM-COFUND-2012.

\end{document}